\newtheorem{thm}{Theorem}[section]
\newtheorem{theorem}[thm]{Theorem}
\newtheorem{corollary}[thm]{Corollary}
\newtheorem{notation}[thm]{Notation}
\newtheorem{lemma}[thm]{Lemma}
\newtheorem{proposition}[thm]{Proposition}
\newtheorem{definition}[thm]{Definition}
\theoremstyle{remark}
\newtheorem{remark}[thm]{Remark}
\newcommand{\RR}{\mathbb R}
\newcommand{\CC}{\mathbb C}
\newcommand{\HH}{\mathbb H}
\begin{document}

\title[analog]{associating vectors in $\CC^n$ with rank 2 projections in $\RR^{2n}$:  with applications}
\author[Casazza, Cheng
 ]{Peter G. Casazza and Desai Cheng}
\address{Department of Mathematics, University
of Missouri, Columbia, MO 65211-4100}
\thanks{The authors were supported by 
NSF DMS 1609760; NSF ATD 1321779; ARO W911NF-16-1-0008.
Part of this reseaech was carried out while the authors
 were visiting the Hong Kong University of Science and
 Technology with support from a grant from (ICERM)
 Institute for Computational and Experimental Research in
Mathematics.}

\email{casazzap@missouri.edu, chengdesai@yahoo.com}

\subjclass{42C15}

\begin{abstract}
We will see that vectors in $\CC^n$ have natural analogs as
rank 2 projections in $\RR^{2n}$ and that this association
transfers many vector properties into properties of rank
two projections on $\RR^{2n}$.  We believe that this association
will answer many open problems in $\CC^n$ where the corresponding
problem in $\RR^n$ has already been answered - and vice
versa.
As a application, we will see that phase
retrieval (respectively, phase retrieval by projections) in $\CC^n$ transfers to a variation of phase retrieval
by rank 2 projections (respectively, phase
retrieval by projections) on $\RR^{2n}$.  As a
consequence, we will answer the open problem:  Give the complex
version of Edidin's Theorem \cite{E} which classifies when
projections do phase retrieval in $\RR^n$.
As another application we answer a longstanding open problem
concerning fusion frames by showing that fusion frames
in $\CC^n$ associate with fusion frames in $\RR^{2n}$
with twice the dimension.
 As another application, we
will show that a family of mutually unbiased bases 
in $\CC^n$ has a natural
analog as a family of mutually unbiased rank 2 projections in
$\RR^{2n}$.  The importance here is that there are very few
real mutually unbiased bases but now there are unlimited numbers
of real mutually unbiased rank 2 projections to be used in their 
place.  As another application, we will give a
variaton of Edidin's theorem which gives a surprising
classification of norm retrieval.  Finally, we will show that
equiangular and biangular frames in $\CC^n$ have an analog as equiangular
and biangular rank 2 projections in $\RR^{2n}$. 
\end{abstract}

\maketitle

\section{Introduction}

We will show that vectors in $\CC^n$ have natural analogs as
rank 2 projections in $\RR^{2n}$. 
The strength of this association is that it carries
norm properties with it.
In particular, if $v,w\in \CC^n$ with analogs in $\RR^{2n}$ of
$v',w'$, let
$P_v,P_w$ be the associated rank 2 projections on $\RR^{2n}$.
Then $|\langle v,w\rangle|=\|v\|\|P_vw'\|$ and if $v,w$ are unit vectors then $tr\ P_vP_w =2|\langle
v,w\rangle|^2$.  This allows us to move many properties from
$\CC^n$ to corresponding properties of rank 2 projections in
$\RR^{2n}$.  In particular, families of mutually unbiased bases
(MUBs)
in $\CC^n$ will associate with families of mutually unbiased
rank 2 projections in $\RR^{2n}$.  The importance of this
association is that there are very few mutually unbiased bases
in $\RR^n$.  It is known that in $\CC^n$ there are at most
$n+1$ MUBs and in $\RR^n$ there are at most $\frac{n}{2}+1$ MUBs
\cite{del}.
But these limits are rarely reached.  It is known that the limit
is reached in $\RR^n$ if $n=4^p$ and the limit is reached in
$\CC^n$ if $n=p^t$ where p is a prime ( see \cite{cam} for n=p,
and \cite{woot} for $p^t$). We now see that there are substantially
more cases where $\RR^n$ reaches its maximum for mutually
unbiased rank two projections.  For researchers who
can live with mutually unbiased rank 2 projections instead of
mutually unbiased bases, they now have a large number of possible
families to work with.

This association carries fusion frames in $\CC^n$ to fusion frames
in $\RR^{2n}$ with the same fusion frame bounds.  Using this we
will answer a longstanding problem in fusion frame theory by
showing that for every $m\ge n$, there is a tight fusion frame of
m 2-dimensional subspaces in $\RR^{2n}$.

We will also see that this association transfers equiangular and
biangular tight frames into equiangular and biangular tight
fusion frames.

This association will also carry with it the notion of phase
retrieval in $\CC^n$.
In certain engineering applications, the phase of a signal is lost
during collection and processing.  This gave rise to a need for methods to recover
the phase of a signal.  Phase retrieval in engineering is over
100 years old and has application to a large number of areas  
including speech recognition~\cite{BeRi99,RaJu93,ReBlScCa004}, and applications such as X-ray crystallography~\cite{BaMn86,Fi78,Fi82}.   
The concept of \textit{phase retrieval} for Hilbert space
frames was introduced in 2006 by Balan, Casazza, and Edidin~\cite{C} and since then it has become an active area of research.

Phase retrieval has been defined for projections as well as for vectors. \textit{Phase retrieval by projections} occur in real life problems, such as crystal twinning~\cite{Dr010}, where the signal is projected onto some lower dimensional subspaces and has to be recovered from the norms of the projections of the vectors onto the subspaces. We refer the reader to~\cite{CCPW} for a detailed study of phase retrieval by projections.  Phase retrieval has been significantly generalized in \cite{WX}.  

A fundamental
result concerning phase retrieval by projections
due to Edidin \cite{E} is that a family
of projections $\{P_i\}_{i=1}^m$ does phase retrieval in $\RR^n$
if and only if for all $0\not= x \in \RR^n$, the vectors
$\{P_ix\}_{i=1}^m$ span $\RR^n$.  It has been an open question
whether there is a complex analog to this theorem.  We will answer
this question in this paper.  First, we will show that complex
phase retrieval by vectors in $\CC^n$ is equivalent to a problem of real phase retrieval by rank two projections in $\RR^{2n}$.
Next, we will give a new {\it geometric} proof of this
theorem.  Finally, we combine these two techniques to give the
complex analog of Edidin's Theorem.

Finally, we will give a classification of norm
retrieval which is an analog of Edidin's theorem.

\section{Preliminaries}

In this section we will introduce the concepts which will be
used throughout the paper.  For notation, we write $\HH^n$ for
a real or complex n-dimensional Euclidean space.  If we need
to restrict ourselves to one of these choices, we will write
$\RR^n$ or $\CC^n$.

\vskip12pt
\noindent {\bf Note:}  We will rely heavily on the fact that given
$x,y \in \RR^n$, $x-y \perp x+y$ if and only if $\|x\|=\|y\|$.

\begin{definition}
A family of vectors $\{v_i\}_{i=1}^m$ is a {\bf frame} for
$\HH^n$ if there are constants $0<A\le B<\infty$ so that
\[ A\|x\|^2 \le \sum_{i=1}^m|\langle x,v_i\rangle|^2 
\le B\|x\|^2,\mbox{ for all }x\in \HH^n.\]
If $A=B$ this is a {\bf tight frame} and if $A=B=1$ it is
a {\bf Parseval frame}.
\end{definition}

We will be working with phase retrieval and phase retrieval by
projections.

\begin{definition}
A family of vectors $\{v_i\}_{i=1}^m$ does {\bf phase retrieval} on
$\HH^n$ if whenever $x,y\in \HH^n$ satisfy
\[ |\langle x,v_i\rangle|=|\langle y,v_i\rangle|,
\mbox{ for all }i=1,2,\ldots,m,\]
we have that $x=cy$ for some $|c|=1$.

We say a family of subspaces $\{S_i\}_{i=1}^m$ (or their orthogonal
projections $\{P_i\}_{i=1}^m$) on $\HH^n$ do {\bf phase retrieval} 
if whenever $x,y\in \HH^n$ satisfy
\[ \|P_ix\|=\|P_iy\|,\mbox{ for all }i=1,2,\ldots,m,\]
then $x=cy$ for some $|c|=1$.
\end{definition}

\begin{remark}
We will find it convenient to work with the contrapositive.  I.e.
$\{P_i\}_{i=1}^m$ does phase retrieval if and only if whenever
$x \not= cy$ for any $|c|=1$, there is a $1\le j \le m$ so that
$\|P_jx\|\not= \|P_jy\|$. Also, whenever $\|P_jx\|\not= \|P_jy\|$,
for some j,
we say that $\{P_j\}_{j=1}^m$ {\bf distinguishes between $x,y$}.
\end{remark}

We now have:

\begin{lemma}
Given a set of projections $\{P_j\}_{j=1}^m$ on $\HH^n$, the set of vectors they
cannot distinguish are those $v,w$ for which $v-w\perp P_j(v+w)$.
\end{lemma}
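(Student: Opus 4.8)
The plan is to reduce ``cannot distinguish'' to a norm-equality condition and then apply the Note stated just before the lemma. By the definition of distinguishing (the Remark), the family $\{P_j\}_{j=1}^m$ fails to distinguish $v$ and $w$ precisely when $\|P_jv\|=\|P_jw\|$ for every $j$. So it suffices to prove, for each fixed $j$, the equivalence $\|P_jv\|=\|P_jw\| \iff v-w\perp P_j(v+w)$, and then take the conjunction over $j$.

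For a fixed $j$, I would apply the Note to the pair of vectors $P_jv$ and $P_jw$ (both of which lie in the range of $P_j$): this yields $\|P_jv\|=\|P_jw\|$ if and only if $P_jv-P_jw\perp P_jv+P_jw$. By linearity of $P_j$ this is exactly the statement $P_j(v-w)\perp P_j(v+w)$.

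The remaining step is to strip off one copy of $P_j$. Since $P_j$ is an orthogonal projection it is self-adjoint and idempotent, so $\langle P_j(v-w),P_j(v+w)\rangle = \langle v-w, P_j^{*}P_j(v+w)\rangle = \langle v-w, P_j(v+w)\rangle$. Hence $P_j(v-w)\perp P_j(v+w)$ is equivalent to $v-w\perp P_j(v+w)$, which completes the fixed-$j$ equivalence and therefore the lemma.

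The computation is short, so the only genuine subtlety — and the point I would be most careful about — is the complex case. The Note as stated is a real-inner-product identity, $\langle x-y,x+y\rangle = \|x\|^2-\|y\|^2$, whereas over $\CC$ the quantity $\langle x-y,x+y\rangle$ differs from $\|x\|^2-\|y\|^2$ by a purely imaginary term. Thus in $\CC^n$ the equivalence must be read with the real part of the inner product (equivalently, with real orthogonality), and I would anchor the statement in the real setting $\RR^{2n}$ where it is actually used and where both the identity $P_j^{*}P_j=P_j$ and the Note apply verbatim.
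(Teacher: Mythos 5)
Your proof is correct and is essentially the paper's own argument: the paper's one-line computation $\langle v-w,P_j(v+w)\rangle=\langle P_jv-P_jw,P_jv+P_jw\rangle=\|P_jv\|^2-\|P_jw\|^2$ is exactly your ``strip off one $P_j$ via self-adjointness and idempotence, then apply the Note'' steps run in the other direction. Your closing caveat about the complex case is well taken --- over $\CC$ one has $\langle x-y,x+y\rangle=\|x\|^2-\|y\|^2+2i\,\mathrm{Im}\langle x,y\rangle$, so orthogonality must be read as vanishing of the real part --- a point the paper silently elides by stating the lemma for $\HH^n$.
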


\begin{proof}
We compute:
\[
\langle v-w,P_j(v+w)\rangle = \langle P_jv-P_jw,P_jv+P_jw\rangle
= \|P_jv\|^2-\|P_jw\|^2.\]
So $v-w \perp P_j(v+w)$ if and only if $\|P_jv\|=\|P_jw\|$.
\end{proof}

We will need the complement property for families of vectors.

\begin{definition}
A family of vectors $\{v_i\}_{i=1}^m$ in $\HH^n$ has the 
{\bf complement property} if whenever $I\subset \{1,2,\ldots,m\}$,
either span\ $\{v_i\}_{i\in I}= \HH^n$ or span $\{v_i\}_{i\in I^c}
= \HH^n.$
\end{definition}

A fundamental result in this area \cite{C} is:

\begin{theorem}
If vectors $\{v_i\}_{i=1}^m$ do phase retrieval in $\HH^n$ then
they have the complement property.
\end{theorem}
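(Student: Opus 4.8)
The plan is to argue by contraposition: I will assume the family $\{v_i\}_{i=1}^m$ fails the complement property and from this produce a pair of vectors that give identical intensity measurements yet are not unimodular scalar multiples of one another, directly contradicting phase retrieval. First I would unpack the failure of the complement property. By definition there is a subset $I\subseteq\{1,2,\ldots,m\}$ for which neither $\{v_i\}_{i\in I}$ nor $\{v_i\}_{i\in I^c}$ spans $\HH^n$. Passing to orthogonal complements, this lets me choose nonzero vectors $x,y\in\HH^n$ with $\langle x,v_i\rangle=0$ for all $i\in I$ and $\langle y,v_i\rangle=0$ for all $i\in I^c$.

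The key construction is then to set $u=x+y$ and $w=x-y$ and check that they carry matching measurements. For $i\in I$ the vanishing of $\langle x,v_i\rangle$ gives $\langle u,v_i\rangle=\langle y,v_i\rangle$ and $\langle w,v_i\rangle=-\langle y,v_i\rangle$, while for $i\in I^c$ the vanishing of $\langle y,v_i\rangle$ gives $\langle u,v_i\rangle=\langle x,v_i\rangle=\langle w,v_i\rangle$. In either regime $|\langle u,v_i\rangle|=|\langle w,v_i\rangle|$ for every $i$, so $u$ and $w$ are indistinguishable by the family.

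The step I expect to be the main obstacle is verifying that $u$ and $w$ are genuinely \emph{not} phase-equivalent, since the unimodular constant $c$ ranges over an entire circle in the complex case; this forces a short case split. If $x$ and $y$ are linearly independent, then a relation $u=cw$ would rearrange to $(1-c)x=-(1+c)y$, and independence would demand $1-c=0$ and $1+c=0$ simultaneously, which is impossible; hence $u\neq cw$ for every $|c|=1$ and phase retrieval fails. If instead $x$ and $y$ are linearly dependent, say $y=\lambda x$ with $\lambda\neq0$, then the two orthogonality conditions combine to force $\langle x,v_i\rangle=0$ for \emph{all} $i$; comparing the nonzero vector $x$ with the zero vector then yields equal (vanishing) measurements while $x\neq c\cdot 0$, so phase retrieval fails here as well. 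In both cases the assumed phase retrieval is contradicted, which establishes the contrapositive and completes the argument.
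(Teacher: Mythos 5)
Your proof is correct and is essentially the standard argument for this result (the paper itself states the theorem as a citation to Balan--Casazza--Edidin and gives no proof, but your contrapositive construction with $x\perp v_i$ for $i\in I$, $y\perp v_i$ for $i\in I^c$, and the pair $x\pm y$ is exactly the canonical one, and it mirrors the $x\pm y$ device the paper uses in its proof of Theorem 2.8). Your careful handling of the degenerate case where $x$ and $y$ are dependent --- falling back on comparing $x$ with the zero vector --- is a point often glossed over and is valid under the paper's definition of phase retrieval.
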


It was also shown in \cite{C} that a family of vectors with
complement property in $\RR^n$ does phase retrieval but this
implication does not hold in $\CC^n$.  A family of vectors 
$\{v_i\}_{i-=1}^m$ is {\bf full spark} if for every $I\subset
\{1,2,\ldots,m\}$ with $|I|=n$, span $\{v_i\}_{i\in I}=\HH^n$.
So a full spark family with $m\ge 2n-1$ has the complement
property.

Edidin \cite{E} gave a fundamental classification of phase retrieval 
by projections for
$\RR^n$ in terms of the spans of $\{P_ix\}_{i=1}^m$, for 
$x\in \RR^n$.

\begin{theorem}[Edidin]\label{T1}
Let $\{S_i\}_{i=1}^m$ (with respective projections $\{P_i\}_{i=1}^m$)
be subspaces of $\RR^n$.  The following are equivalent:
\begin{enumerate}
\item $\{P_i\}_{i=1}^m$ does phase retrieval.
\item For every $0\not= x \in \RR^n$, span $\{P_ix\}_{i=1}^m =
\RR^n$.
\end{enumerate}
\end{theorem}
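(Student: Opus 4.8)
The plan is to prove both implications simultaneously by using the Lemma to characterize exactly when $\{P_j\}_{j=1}^m$ \emph{fails} to do phase retrieval, and then recognizing that this failure is precisely the negation of condition (2). First I would set up the contrapositive framing: since we work over $\RR^n$, the scalars of modulus one are $\pm 1$, so phase retrieval holds iff the only pairs $v,w$ that $\{P_j\}_{j=1}^m$ cannot distinguish are those with $v = \pm w$. By the Lemma, $\{P_j\}_{j=1}^m$ cannot distinguish $v,w$ exactly when $v-w \perp P_j(v+w)$ for every $j$.

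The key move is a change of variables. Set $x = v-w$ and $y = v+w$, so that $v = \tfrac12(x+y)$ and $w = \tfrac12(y-x)$. Then $v = w \iff x = 0$ and $v = -w \iff y = 0$, so the exceptional case $v \neq \pm w$ corresponds exactly to $x \neq 0$ and $y \neq 0$. In these coordinates the non-distinguishing condition reads $x \perp P_j y$ for all $j$. Using that each $P_j$ is an orthogonal (hence self-adjoint) projection, $\langle x, P_j y\rangle = \langle P_j x, y\rangle$, so this is equivalent to $y \perp P_j x$ for all $j$, i.e.\ $y \perp \mathrm{span}\{P_j x\}_{j=1}^m$.

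Now I would assemble the equivalence. Phase retrieval fails iff there is a pair with $x \neq 0$, $y \neq 0$, and $y \perp \mathrm{span}\{P_j x\}_{j=1}^m$. For a fixed $x \neq 0$, a nonzero such $y$ exists iff $\mathrm{span}\{P_j x\}_{j=1}^m$ has nontrivial orthogonal complement, i.e.\ iff $\mathrm{span}\{P_j x\}_{j=1}^m \neq \RR^n$. Hence phase retrieval fails iff there exists $0 \neq x \in \RR^n$ with $\mathrm{span}\{P_j x\}_{j=1}^m \neq \RR^n$, which is exactly the negation of (2). Taking contrapositives yields (1) $\iff$ (2).

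The argument reduces to a single clean computation, so I do not anticipate a serious obstacle. The one point demanding care is the bookkeeping of the change of variables --- verifying that $v \neq \pm w$ matches precisely $x \neq 0$ and $y \neq 0$ --- together with the use of self-adjointness of the $P_j$ to pass from $x \perp P_j y$ to $y \perp P_j x$, which is what makes the span condition on $\{P_j x\}_{j=1}^m$ emerge. This self-adjointness symmetry is the crux, and it is also where the passage to the complex setting later in the paper will require genuine new ideas, since the analogous real reformulation through rank $2$ projections is needed there.
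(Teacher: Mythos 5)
Your proof is correct, and it is tighter than the paper's. The paper proves the two implications separately: the direction (1) $\Rightarrow$ (2) is Theorem \ref{tt7}, whose proof is exactly your "(2) fails $\Rightarrow$ (1) fails" step (pick $y$ orthogonal to all $P_ix$, form $x\pm y$, and use $\langle P_ix,y\rangle=\langle P_ix,P_iy\rangle=0$); but for the converse the paper runs a longer "geometric" argument (Theorems \ref{t2} and \ref{t3}) in which each subspace $S_i$ is given a special orthonormal basis $b_1,\dots,b_d$ with only $b_1$ non-orthogonal to $x$, and the condition $\|P_iv\|=\|P_iw\|$ is analyzed coordinate-by-coordinate in that basis. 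You collapse both directions into a single equivalence chain by observing that the paper's own Lemma (non-distinguishability of $v,w$ is exactly $v-w\perp P_j(v+w)$ for all $j$) plus self-adjointness of $P_j$ immediately gives the symmetric reformulation $v+w\perp\mathrm{span}\,\{P_j(v-w)\}_{j=1}^m$, and that the bijective change of variables $x=v-w$, $y=v+w$ matches $v\neq\pm w$ with $x\neq 0$, $y\neq 0$. What the paper's more elaborate route buys is the scaffolding it reuses for the complex analog in Section 5, where, as you correctly anticipate, the scalar group $|c|=1$ is a circle rather than $\{\pm1\}$ and the orthogonal complement of $\mathrm{span}\,\{P_ix'\}$ must be pinned down as the single line through $x''$ rather than merely shown to be trivial; your self-adjointness shortcut does not by itself survive that passage.
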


The corresponding result for frames (I.e. rank one projections)
has been done in \cite{B}.  The necessity of the condition for
frames also appeared in \cite{dustin}.
For $\CC^n$, (2) does not imply (1) in the theorem in general.
For example, if $\{v_i\}_{i=1}^5$ is a full spark family of
vectors in $\CC^3$, then it has complement property and so
if $P_i$ is the projection onto span $v_i$ then for every
$0\not= x \in \CC^3$, span $\{P_ix\}_{i=1}^5 = \CC^3$.  But any
family doing phase retrieval in $\CC^3$ must contain at least
8 vectors \cite{E}.  However, (1) does imply (2) in $\CC^n$
\cite{dustin}.

\begin{theorem}\label{tt7}
If projections $\{P_i\}_{i=1}^m$ do phase retrieval on $\HH^n$
then for every $0\not= x \in \HH^n$, span $\{P_ix\}_{i=1}^m =
\HH^n$.
\end{theorem}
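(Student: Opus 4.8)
The plan is to prove the contrapositive: assuming $\mathrm{span}\,\{P_ix\}_{i=1}^m \neq \HH^n$ for some $0\neq x\in\HH^n$, I would produce two vectors the projections fail to distinguish yet which are not unimodular scalar multiples of one another, so that $\{P_i\}_{i=1}^m$ cannot do phase retrieval. Write $V=\mathrm{span}\,\{P_ix\}_{i=1}^m$; by hypothesis $V^\perp\neq\{0\}$, so I may choose a nonzero $z\in V^\perp$, that is, a vector with $\langle z,P_ix\rangle=0$ for every $i$.

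The central construction is to set $v=\tfrac12(x+z)$ and $w=\tfrac12(x-z)$, so that $v+w=x$ and $v-w=z$. Then for each $i$ we have $v-w=z\perp P_ix=P_i(v+w)$, and by the lemma identifying indistinguishable pairs as exactly those $v,w$ with $v-w\perp P_j(v+w)$, this yields $\norm{P_iv}=\norm{P_iw}$ for all $i$. Hence the projections cannot distinguish $v$ from $w$, and it only remains to rule out $v=cw$ for scalars $c$ with $\abs{c}=1$.

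To that end I would observe that $v=cw$ is equivalent to $(1-c)x+(1+c)z=0$, which, when $x$ and $z$ are linearly independent, forces $1-c=0$ and $1+c=0$ simultaneously, an impossibility. So the task reduces to choosing $z\in V^\perp\setminus\{0\}$ independent of $x$. This is available whenever $x\notin V^\perp$; and if $x\in V^\perp$, then $\langle x,P_ix\rangle=\norm{P_ix}^2=0$ forces $P_ix=0$ for all $i$, the degenerate case in which $x$ and $2x$ are already indistinguishable and not unimodular multiples. Either way phase retrieval fails.

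The hard part is not the construction but the exclusion of unimodular multiples in the complex setting. In $\RR^n$ the condition $\abs{c}=1$ only permits $c=\pm1$, and $v\neq\pm w$ is immediate from $x,z\neq 0$; the genuine subtlety is that over $\CC^n$ the entire circle of unimodular scalars must be defeated at once, and this is precisely what forces the linear independence of $x$ and $z$. Securing that independence, together with disposing of the trivial case $P_ix=0$ for all $i$, is the crux that lets the single argument run uniformly over $\HH^n=\RR^n$ or $\CC^n$.
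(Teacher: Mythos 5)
Your proof is correct and follows essentially the same route as the paper's: both argue the contrapositive by choosing a nonzero $z$ (the paper's $y$) orthogonal to every $P_ix$, forming the pair $x\pm z$ (up to an immaterial factor of $\tfrac12$), showing the projections cannot distinguish them, and then ruling out unimodular multiples. If anything, your explicit treatment of the degenerate case $P_ix=0$ for all $i$ (via $x$ versus $2x$) is a touch cleaner than the paper's corresponding step.
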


\begin{proof}
We will prove the contrapositive.  So assume there is an $0\not=
x \in \HH^n$ so that span $\{P_ix\}_{i=1}^m \not= \HH^n$.  Choose
$0\not= y \in \HH^n$ so that $y\perp P_ix$ for all $i=1,2,\ldots,m.$
Then
\[ \langle P_ix, y\rangle = \langle P_ix,P_iy\rangle =0,
\mbox{ for all }i=1,2,\ldots,m.\]
Let $w=x+y$ and $v=x-y$.  Then for all $i$,
\[ \|P_iw\|^2 = \|P_ix+P_iy\|^2 = \|P_ix\|^2+\|P_iy\|^2
= \|P_ix-P_iy\|^2 = \|P_iv\|^2.\]
But, $w \not= cv$ for any $|c|=1$.  Since if $w=cv$ then either
$c=\pm 1$ and so either x=0 or y=0, or $x=\frac{1+c}{1-c}y$ which
combined with $P_ix \perp P_iy$ implies again x=0.  I.e.  $\{P_i\}_{i=1}^m$ fails to do phase retrieval.
\end{proof}

\section{A New Proof of Edidin's Theorem}

In this section we establish a new proof of Theorem \ref{T1}.
This proof is {\bf geometric} and it will allow us to generalize
it to the complex case.  It will also direct us to introduce a
natural association between vectors in $\CC^n$ and two dimensional
subspaces of $\RR^{2n}$.

To give our geometric proof of Theorem \ref{T1}, we will need
a sequence of results.

\begin{lemma}
Given a subspace S of dimension d in $\RR^n$, for any point x, either S is in the orthogonal complement of x or there exists a subspace 
$S'$ of dimension d-1 contained in S that is also contained in the orthogonal complement of x.
\end{lemma}

\begin{proof}
Let $C$ be the orthogonal complement of $x$ in $\RR^n$. 
Then $C$ is a hyperplane so its intersection with $S$ is either all of $S$ or a subspace of dimension $d-1$.
\end{proof}

From the above it is clear that for any point $x$ either $S$ is orthogonal to $x$ or there exists an orthonormal basis $b_1,...b_d$ of $S$ such that $b_1$ is not orthogonal to $x$ but the rest of the elements in the orthonormal basis are orthogonal to $x$.

\begin{remark}Given two real numbers $a$ and $b$, $|a| = |b|$ if and only if $a = b$ or $a = -b$. Hence given two vectors $v$ and $w$, 
$|<x,v>| = |<x,w>|$ if and only if $x$ is orthogonal to $v-w$ or $v+w$.
\end{remark}
Another simple observation we will heavily use is:

\begin{lemma}
Given two vectors $x,y \in \HH^n$, letting $v=\frac{x-y}{2}$ and $w=\frac{x+y}{2}$, we have that $w+v=x$ and $w-v=y$.  
\end{lemma}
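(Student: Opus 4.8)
The plan is to prove both identities by direct algebraic substitution, since the lemma merely records how the half-sum and half-difference of $x$ and $y$ recombine to recover the original pair. First I would write out $w+v$ by replacing $w$ and $v$ with their definitions, obtaining $\frac{x+y}{2}+\frac{x-y}{2}$. Placing both fractions over the common denominator $2$, the $+y$ and $-y$ contributions cancel while the two copies of $\frac{x}{2}$ add to $x$, so $w+v=x$.

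Next I would carry out the analogous computation for $w-v$, namely $\frac{x+y}{2}-\frac{x-y}{2}$. Here it is the two $\frac{x}{2}$ terms that cancel, while subtracting $-\frac{y}{2}$ turns it into $+\frac{y}{2}$, so the $y$ contributions reinforce and combine to give $y$; hence $w-v=y$. Both identities hold verbatim in $\HH^n$, since they are coordinatewise linear combinations that invoke neither the norm nor the inner product and make no use of whether the scalars are real or complex.

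There is essentially no obstacle here: the statement is an elementary linear identity, and the only point requiring care is the sign bookkeeping when forming $w-v$, so that the $y$ terms add rather than cancel. The lemma functions as a convenient normalization, letting one pass freely between a pair of vectors $x,y$ and the \emph{half-sum/half-difference} pair $w,v$ that appears in the preceding remark and in the characterization of indistinguishable vectors; the role of the proof is simply to certify that this change of variables is a genuine involution with the inverse exhibited above.
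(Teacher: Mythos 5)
Your computation is correct: both identities follow by direct substitution and cancellation, exactly as you describe. The paper states this lemma as a ``simple observation'' and omits any proof, so your elementary verification is precisely the intended (and only reasonable) argument.
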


  The
next theorem is a variation of the argument of Theorem \ref{t1}.

\begin{theorem}\label{t2}
Given a family of subspaces $\{S_{i}\}_{i=1}^m$ of $\RR^n$ 
with respective projections $\{P_i\}_{i=1}^m$, and a point $0\not= x \in \RR^n$, let M = $span \ \{P_ix\}_{i=1}^m$. For any $y \in M^\perp$, we have for all $1\leq i \leq m$, $\|P_{i}(v)\| = \|P_{i}(w)\|$ for all $w$ and $v$ such that $w+v = x$ and $w-v = y$.
\end{theorem}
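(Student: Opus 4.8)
The plan is to reduce the asserted equality $\|P_i(v)\| = \|P_i(w)\|$ to a single orthogonality relation and then read that relation off directly from the definition of $M$. First I would solve the linear system for $w$ and $v$: adding and subtracting the two defining equations $w + v = x$ and $w - v = y$ gives $w = \frac{x+y}{2}$ and $v = \frac{x-y}{2}$, so the pair $(v,w)$ is in fact uniquely determined by $(x,y)$ (this is exactly the content of the simple observation recorded just before the statement). From this I would record the only two combinations that actually matter, namely $v + w = x$ and $v - w = -y$.

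Next I would invoke the earlier lemma characterizing which vectors a projection fails to distinguish. Through the identity $\langle v - w, P_i(v+w)\rangle = \|P_i v\|^2 - \|P_i w\|^2$ established in that lemma's proof, the equality $\|P_i(v)\| = \|P_i(w)\|$ holds if and only if $v - w \perp P_i(v+w)$. Substituting the combinations from the previous step, this condition becomes $\langle -y, P_i x\rangle = 0$, i.e.\ $y \perp P_i x$, and the sign on $y$ is harmless since $y \perp P_i x$ and $-y \perp P_i x$ are the same statement.

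Finally I would conclude using the definition of $M$. By construction $P_i x \in M = \mathrm{span}\{P_j x\}_{j=1}^m$ for each $i$, while $y$ was chosen in $M^\perp$; hence $\langle y, P_i x\rangle = 0$ for every $i$, which is precisely the orthogonality condition derived above. This yields $\|P_i(v)\| = \|P_i(w)\|$ for all $1 \le i \le m$, as required. I do not expect a genuine obstacle here: the statement is an immediate corollary of the indistinguishability lemma once the change of variables is performed, and the only point requiring any care is the bookkeeping that identifies $v - w$ with $-y$ and $v + w$ with $x$ so that membership in $M$ and $M^\perp$ can be applied.
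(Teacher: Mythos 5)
Your proposal is correct, and it takes a cleaner route than the paper does. You reduce everything to the polarization identity from the earlier indistinguishability lemma: $\langle v-w, P_i(v+w)\rangle = \|P_iv\|^2 - \|P_iw\|^2$, so that with $v+w=x$ and $v-w=-y$ the desired equality is exactly the statement $y\perp P_ix$, which is immediate from $y\in M^{\perp}$ and $P_ix\in M$. The paper instead argues geometrically: it splits into the cases $x\perp S_i$ and $x\not\perp S_i$, and in the latter case constructs an adapted orthonormal basis $b_1,\dots,b_d$ of $S_i$ with $P_ix$ a nonzero multiple of $b_1$ and $b_2,\dots,b_d$ orthogonal to $x$, then compares $\langle w,b_j\rangle$ with $-\langle v,b_j\rangle$ coordinate by coordinate to conclude that $\|P_iv\|=\|P_iw\|$ forces $w-v\perp b_1$. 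Your argument is shorter and avoids the case split and the basis construction entirely; what the paper's heavier setup buys is that the same adapted basis is reused verbatim in the converse direction (Theorem \ref{t3}), where one needs to see that $y\not\perp P_ix$ produces an actual failure of equality via the Pythagorean theorem. Both proofs are sound; yours is the more economical for this particular implication.
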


\begin{proof}
Given any $S_i$, if $x=w+v\perp S_i$ then $P_iw=-P_iv$ and so $\|P_{i}(w)\| = \|P_{i}(v)\|$.
If $S_i$ is not orthogonal to $x$ then as mentioned above there exists an orthonormal basis $b_1,...b_d$ of $S_i$ such that $b_1$ is not orthogonal to $x$ but the rest of the vectors are
orthogonal to x. Hence the projection of $x$ onto $S_i$, $P_ix$, is a nonzero scalar multiple of $b_1$, 
 $<w,b_1> \neq -<v,b_1>$ and for all $2 \leq i \leq d$, $<w,b_i> = -<v,b_i>$. 
 Hence $\|P_{i}(v)\| = \|P_{i}(w)\|$ if and only if $<w,b_1> = <v,b_1>$. But, this happens only if $w-v$ is orthogonal to $b_1$.

Considering the above argument over all $i$ we see if we pick any point $y \in M^\perp$ such that $w+v = x$ and $w-v = y$, then for all $1\leq i \leq m$, $\|P_{i}(v)\| = \|P_{i}(w)\|$.
\end{proof}

\begin{theorem}\label{t3}
Given the assumptions of Theorem \ref{t2},
given $x,y \in \RR^n$, 
if we can pick $w$ and $v$ such that $w+v = x$ and $w-v=y$ are not orthogonal to $M$ then for some $i$,
$\|P_{i}(v)\| \neq \|P_{i}(w)\|$
\end{theorem}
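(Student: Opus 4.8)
The plan is to recognize Theorem~\ref{t3} as precisely the converse of Theorem~\ref{t2}: the latter shows that choosing $y \in M^\perp$ forces $\|P_i(v)\| = \|P_i(w)\|$ for every $i$, whereas here I must show that if $y = w-v$ fails to be orthogonal to $M$ then at least one index genuinely separates $v$ and $w$. The quickest route is to invoke the distinguishing Lemma proved earlier (the one computing $\langle v-w, P_j(v+w)\rangle = \|P_j v\|^2 - \|P_j w\|^2$). Since $w+v = x$ gives $v+w = x$ and $w-v = y$ gives $v-w = -y$, that Lemma says $\|P_i v\| = \|P_i w\|$ for all $i$ if and only if $-y \perp P_i x$ for all $i$, i.e.\ $y \perp \mathrm{span}\{P_i x\}_{i=1}^m = M$. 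Taking the contrapositive, if $y$ is not orthogonal to $M$ then some index $i$ must satisfy $\|P_i v\| \neq \|P_i w\|$, which is exactly the claim.

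If instead I want to stay in the self-contained geometric register of Theorem~\ref{t2}, I would reprove this equivalence index by index. First I would fix $i$ and split on whether $x \perp S_i$. In that degenerate case $P_i x = 0$, so $y \perp P_i x$ automatically, and since $x = w+v$ gives $P_i w = -P_i v$ the two norms already agree. Otherwise I would use the orthonormal basis $b_1,\dots,b_d$ of $S_i$ supplied by the earlier lemma, with $b_1$ not orthogonal to $x$ and $b_2,\dots,b_d \perp x$, so that $P_i x$ is a nonzero multiple of $b_1$. Expanding $\|P_i w\|^2 - \|P_i v\|^2$ in this basis, every coordinate $j \ge 2$ cancels because $\langle w, b_j\rangle = -\langle v, b_j\rangle$ (as $\langle x, b_j\rangle = 0$), leaving $\|P_i w\|^2 - \|P_i v\|^2 = \langle w, b_1\rangle^2 - \langle v, b_1\rangle^2$.

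The one genuine subtlety, and the step I would treat most carefully, is the sign disambiguation here, exactly the issue flagged in the paper's opening Note and in the Remark that $|a| = |b|$ iff $a = \pm b$. Equality of the two norms gives only $\langle w, b_1\rangle = \pm\langle v, b_1\rangle$, and I must rule out the minus sign. This is where $b_1 \not\perp x$ is essential: since $\langle w, b_1\rangle + \langle v, b_1\rangle = \langle x, b_1\rangle \neq 0$, the case $\langle w, b_1\rangle = -\langle v, b_1\rangle$ is impossible, so $\|P_i v\| = \|P_i w\|$ forces $\langle w-v, b_1\rangle = 0$, i.e.\ $y \perp b_1$, equivalently $y \perp P_i x$. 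Assembling over all $i$, the equality $\|P_i v\| = \|P_i w\|$ holds for every $i$ if and only if $y \perp P_i x$ for every $i$, hence if and only if $y \in M^\perp$. Finally, since by hypothesis $y$ is not orthogonal to $M$, there is an index $i$ with $\langle y, P_i x\rangle \neq 0$, and for that $i$ I conclude $\|P_i(v)\| \neq \|P_i(w)\|$.
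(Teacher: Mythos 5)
Your proposal is correct. Your primary route --- substituting $v+w=x$ and $v-w=-y$ into the identity $\langle v-w,\,P_i(v+w)\rangle=\|P_iv\|^2-\|P_iw\|^2$ from the distinguishing Lemma of Section 2 --- is not the paper's argument, and it is shorter: it gives the quantitative statement $\|P_iv\|^2-\|P_iw\|^2=-\langle y,P_ix\rangle$, so the separating index is precisely any $i$ with $\langle y,P_ix\rangle\neq 0$, which exists by hypothesis. The paper instead stays inside its geometric setup: it takes the adapted orthonormal basis $b_1,\dots,b_d$ of $S_i$ with $b_1\not\perp x$ and $b_2,\dots,b_d\perp x$, observes that $\langle w,b_1\rangle\neq\pm\langle v,b_1\rangle$ while $\langle w,b_j\rangle=-\langle v,b_j\rangle$ for $j\ge 2$, and concludes by the Pythagorean theorem --- which is exactly your second, ``self-contained'' route. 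Your write-up of that route is in fact more careful than the paper's: you make the sign disambiguation via $\langle x,b_1\rangle\neq 0$ explicit, you treat the degenerate case $x\perp S_i$ (which the proof in the paper silently skips over), and you avoid the paper's index clash where ``for all other $i$'' should read ``for all $2\le j\le d$.'' Either route is sound; the Lemma-based one buys brevity and an exact formula for the norm gap, while the basis-based one preserves the geometric narrative the section is built around.
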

\begin{proof}
Since $M$ was the span of all the $\{P_ix\}_{i=1}^m$ then for some $i$, $w-v$ is not orthogonal to $P_{i}(x)$. Hence for some orthonormal basis $b_1,...b_d$ of $S_i$ we have $<w,b_1> \neq <v,b_1>$ and $<w,b_1> \neq -<v,b_1>$ (by construction) and for all other $i$, $<w,b_1> = -<v,b_1>$. Clearly $\|P_{i}(v)\| \neq \|P_{i}(w)\|$ by the Pythagorean theorem.
\end{proof}

We now prove Edidin's Theorem \cite{E}.

\begin{theorem}
If a set of subspaces $\{S_{i}\}_{i=1}^m$ of $\RR^n$ with
respective projections $\{P_i\}_{i=1}^m$ does not do real phase retrieval then for some $0\not= x$, $span\ \{P_i(x)\}_{i=1}^m
\not= \RR^n$.
\end{theorem}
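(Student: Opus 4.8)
The plan is to read this statement off directly from Theorem \ref{t3}, simply by reversing its logical direction; all of the geometric content has already been built into Theorems \ref{t2} and \ref{t3}, so what remains is a clean assembly. Suppose $\{P_i\}_{i=1}^m$ fails to do real phase retrieval. By the contrapositive formulation recorded in the Remark on distinguishing vectors, this means precisely that there is a pair the projections cannot distinguish: there exist $w,v\in\RR^n$ with $\|P_iw\|=\|P_iv\|$ for every $i$, yet $w\neq cv$ for any $|c|=1$, i.e. $w\neq\pm v$.

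First I would pass to the $x,y$ coordinates used throughout Section 3. Following the Lemma, set $x=w+v$ and $y=w-v$, so that $w=\frac{x+y}{2}$ and $v=\frac{x-y}{2}$. The condition $w\neq\pm v$ forces both $x\neq 0$ and $y\neq 0$: if $x=0$ then $w=-v$, and if $y=0$ then $w=v$, each of which is excluded. This is the small but essential point, since the span appearing in the conclusion must be taken over a genuinely nonzero point.

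The heart of the argument is then to apply Theorem \ref{t3} to this nonzero $x$, with $M=\text{span}\,\{P_ix\}_{i=1}^m$. Theorem \ref{t3} asserts that if $y=w-v$ is \emph{not} orthogonal to $M$, then $\|P_iw\|\neq\|P_iv\|$ for some $i$. Since by hypothesis $\|P_iw\|=\|P_iv\|$ for all $i$, the contrapositive of Theorem \ref{t3} yields $y\perp M$. Because $y$ is a nonzero vector orthogonal to $M$, we conclude that $M=\text{span}\,\{P_ix\}_{i=1}^m\neq\RR^n$, which is exactly the desired conclusion for the nonzero point $x$.

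The only delicate step is not a computation but a matching of hypotheses: one must check that the indistinguishable pair $w,v$ produced by the failure of phase retrieval feeds correctly into the $x$-centered framework of Theorems \ref{t2} and \ref{t3}, and in particular that the associated $x$ is nonzero so that $M$ is the span over an admissible point. Once that bookkeeping is in place, no further estimates are needed, since Theorem \ref{t3} has already supplied the geometric implication; the present theorem is its contrapositive repackaged in the language of phase retrieval.
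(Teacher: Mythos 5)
Your proposal is correct and is essentially identical to the paper's own proof: both take an indistinguishable pair $v\neq\pm w$, set $x=w+v$ (noting $w+v$ and $w-v$ are nonzero), and invoke the contrapositive of Theorem \ref{t3} to conclude that $w-v$ is a nonzero vector orthogonal to $\mathrm{span}\,\{P_ix\}_{i=1}^m$. Your extra sentence spelling out why $x\neq 0$ and $y\neq 0$ is a welcome (if minor) amplification of the paper's ``clearly.''
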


\begin{proof}
Suppose there exists $v$ and $w$, $v \neq \pm w$ such that for all $1\leq i \leq m$, $\|P_{i}v\| = \|P_{i}w\|$. Clearly 
$w+v$ and $w-v$ are nonzero. By the above theorem choose $w+v$ to be $x$. Clearly $w-v$ must be orthogonal to $span\ \{P_ix\}_{i=1}^m$ by Theorem \ref{t3}.
\end{proof}

The other direction of Theorem \ref{T1} is Theorem \ref{tt7}.

\section{Turning vectors in $\CC^n$ into rank 2 projections
on $\RR^{2n}$}

We need a piece of notation.

\begin{notation}
For the rest of the paper, for a complex vector $v = (a_1 + ib_1,...a_n + ib_n)$ in $\CC^n$ let $v' = (a_1,b_1,...,a_n,b_n)$ and $v'' = (-b_1,a_1,...,-b_n,a_n)$ be vectors in $\RR^{2n}$.  We will
write $S_v=span\ \{v',v''\}$ and $P_v$ as the rank 2 projection
of $\RR^{2n}$ onto $S_v$.
\end{notation}

The following are immediate from the definition.

\begin{proposition}\label{p1}
Given vectors $v=(a_1+ib_1,\ldots,a_n+ib_n$ and
$w=(a_1'+ib_1',\ldots,a_n'+b_n')$.  The following hold:
\begin{enumerate}
\item we have 
\[ v'\perp v''\mbox{ and } \|v\|^2=\|v'\|^2.\]
\item We have
\[ \langle w',v'\rangle = \sum_{j=1}^n(a_ja_j'+b_jb_j')
\mbox{ and }\langle w',v''\rangle =
\sum_{j=1}^n (a_jb_j'-a_j'b_j).\]
\item We have
\[ \langle w'',v''\rangle = \langle w',v'\rangle.\]
\item We have
\[ \langle w,v\rangle = \langle w',v'\rangle +i \langle w',v''
\rangle.\]
\item It follows that if $\|v\|=1$ then 
\[ |\langle v,w\rangle| = |\langle w',v'\rangle+i\langle w',v''\rangle|=\sqrt{|\langle w',v'\rangle|^2+|\langle w',
v''\rangle|^2}= \|P_vw'\|.\]
\item If $\|v\|\not= 1$ then
\[ |\langle v,w\rangle|=\|v\|\|P_vw'\|.\]
\item $(iw)'= w''$.
\item  Given $z=(a_1,b_1a_2,b_2,\ldots,a_n,b_n)\in \RR^{2n}$,
$z=v'$ where $v=(a_1+ib_1,\ldots,a_n+ib_n)\in \CC^n$. 
Hence, $\RR^{2n}=\{v':v\in \CC^n\}$.
\end{enumerate}
\end{proposition}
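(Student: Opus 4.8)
The plan is to verify each assertion by direct coordinate computation, since every claim unwinds from the definitions of $v'$ and $v''$. I would record at the outset the two structural facts that drive everything: the real-linear map $v \mapsto v'$ is a norm-preserving bijection of $\CC^n$ onto $\RR^{2n}$, and multiplication by $i$ on $\CC^n$ corresponds under this map to the passage from $v'$ to $v''$. The first is statement (8) (surjectivity is immediate, since any $(a_1,b_1,\ldots,a_n,b_n)$ equals $v'$ for $v=(a_1+ib_1,\ldots,a_n+ib_n)$) together with the norm identity in (1). The second is statement (7), whose proof is the one-line computation $iw = (-b_1'+ia_1',\ldots,-b_n'+ia_n')$, so that $(iw)'=w''$.

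First I would prove (1): writing $v=(a_1+ib_1,\ldots,a_n+ib_n)$, one has $\langle v',v''\rangle = \sum_j(-a_jb_j+b_ja_j)=0$ and $\|v'\|^2=\sum_j(a_j^2+b_j^2)=\|v\|^2$; the same computation shows $\|v''\|=\|v'\|=\|v\|$. Statements (2) and (3) then follow by expanding the relevant Euclidean inner products coordinate by coordinate and comparing. For (4) I would fix the convention that $\langle w,v\rangle=\sum_j w_j\overline{v_j}$ is conjugate-linear in the second slot; multiplying out $(a_j'+ib_j')(a_j-ib_j)$ and separating real and imaginary parts produces exactly the two sums appearing in (2), so that $\langle w,v\rangle=\langle w',v'\rangle+i\langle w',v''\rangle$, which is (4).

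The only statements involving more than bookkeeping are (5) and (6). For (5), when $\|v\|=1$ the pair $\{v',v''\}$ is, by (1), an orthonormal system, hence an orthonormal basis of $S_v$; therefore $P_vw'=\langle w',v'\rangle v'+\langle w',v''\rangle v''$, and the Pythagorean theorem gives $\|P_vw'\|^2=\langle w',v'\rangle^2+\langle w',v''\rangle^2$. Comparing with (4) and using $|\langle v,w\rangle|=|\langle w,v\rangle|$ yields $|\langle v,w\rangle|=\|P_vw'\|$. For (6) I would reduce to (5) by normalizing: set $u=v/\|v\|$, observe that $S_u=S_v$ so that $P_u=P_v$, and apply (5) to $u$ to obtain $\|P_vw'\|=|\langle u,w\rangle|=|\langle v,w\rangle|/\|v\|$.

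I do not expect a genuine obstacle, as the proposition is a collection of routine identities and the header ``immediate from the definition'' is accurate. The two points that require a little care are choosing the inner-product convention in (4) so that the stated signs come out correctly, and recognizing in (5) that the orthogonality and equal-norm content of (1) is exactly what makes $\{v',v''\}$ (or its normalization) an orthonormal basis, so that the projection norm can be computed by the Pythagorean theorem rather than by inverting a Gram matrix.
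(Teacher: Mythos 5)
Your proof is correct; the paper itself offers no proof beyond the remark that these facts are ``immediate from the definition,'' and your coordinate-by-coordinate verification (with the conjugate-linear-in-the-second-slot convention making (4) come out with the stated signs, and the orthonormality of $\{v',v''\}$ driving the Pythagorean computation in (5)--(6)) is exactly the intended routine argument.
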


\begin{corollary}\label{cor}
If $\{v_j\}_{j=1}^m$ is an orthonormal set of vectors in $\CC^n$
then $\{v_j',v_j''\}_{j=1}^m$ is an orthonormal set of vectors in
$\RR^{2n}$.  
\end{corollary}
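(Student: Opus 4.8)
The plan is to reduce everything to the identities already collected in Proposition \ref{p1}, so that orthonormality of the real system falls out of the complex orthonormality of $\{v_j\}_{j=1}^m$. Since $\{v_j',v_j''\}_{j=1}^m$ consists of $2m$ vectors, I must verify that each has norm $1$ and that every distinct pair is orthogonal.

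First I would handle the norms. Because each $v_j$ is a unit vector, part (1) of Proposition \ref{p1} gives $\|v_j'\|^2 = \|v_j\|^2 = 1$. For $v_j''$ I would invoke part (7), which states $v_j'' = (iv_j)'$; since $\|iv_j\| = \|v_j\| = 1$, applying (1) again yields $\|v_j''\| = \|(iv_j)'\| = \|iv_j\| = 1$. (Alternatively one computes $\|v_j''\|^2 = \sum_{j}(b_j^2+a_j^2) = \|v_j\|^2$ directly from the definition.)

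Next, the orthogonality within a single index, $v_j' \perp v_j''$, is exactly the first assertion of part (1). For the cross terms with $j\neq k$ I would exploit that $\{v_j\}$ is orthonormal, so $\langle v_k,v_j\rangle = 0$. By part (4), $\langle v_k,v_j\rangle = \langle v_k',v_j'\rangle + i\langle v_k',v_j''\rangle$, and since the two summands are the real and imaginary parts of $0$, both vanish: $\langle v_k',v_j'\rangle = 0$ and $\langle v_k',v_j''\rangle = 0$. Running the same computation on $\langle v_j,v_k\rangle = 0$ gives $\langle v_j',v_k'\rangle = 0$ and $\langle v_j',v_k''\rangle = 0$, which by symmetry of the real inner product also covers $v_j''\perp v_k'$.

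The only remaining pair is $v_j''\perp v_k''$, and here I would use part (3): $\langle v_k'',v_j''\rangle = \langle v_k',v_j'\rangle = 0$ by the computation just made. Collecting the cases shows that every pair among $\{v_j',v_j''\}_{j=1}^m$ is orthogonal and each vector is a unit vector, so the system is orthonormal. There is no genuine obstacle; the only point requiring care is bookkeeping — one must extract both the $v'$--$v'$ and the $v'$--$v''$ orthogonalities from part (4) (applying it in both orderings of the pair) and then feed the already-established $v'$--$v'$ relation into part (3) to close out the $v''$--$v''$ case.
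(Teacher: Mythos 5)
Your proof is correct and follows essentially the same route as the paper, which simply cites parts (1) and (4) of Proposition \ref{p1} and calls the result immediate; you have merely written out the details, additionally invoking parts (3) and (7) to handle the $v''$--$v''$ orthogonality and the norm of $v_j''$. Nothing further is needed.
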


\begin{proof}
This is immediate by (1) and (4) in Proposition \ref{p1}.
\end{proof}

As a consequence:

\begin{corollary}\label{c11}
Given complex vectors $v,w_1,w_2$ let P be the projection
onto $S_v$.  The following are equivalent:
\begin{enumerate}
\item $|<w_1,v>| = |<w_2,v>|$.
\item $\|Pw_1'\|=\|Pw_2'\|.$
\end{enumerate}  
 In particular if $w_1 = cw_2$ where $c\in \CC^n$ and $|c|=1$ then $\|Pw_1'\|=\|Pw_2'\|.$
\end{corollary}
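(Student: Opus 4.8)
The plan is to read this off directly from Proposition \ref{p1}, which already does all the work. Parts (5) and (6) there combine into a single norm identity valid for every $v$: for all $w\in\CC^n$,
$$|\langle v,w\rangle| = \|v\|\,\|P_v w'\|.$$
Because the complex inner product is conjugate-symmetric, $|\langle w_i,v\rangle| = |\langle v,w_i\rangle|$, so applying this identity to $w_1$ and to $w_2$ (with $P=P_v$) rewrites the two sides of condition (1) as $\|v\|\,\|Pw_1'\|$ and $\|v\|\,\|Pw_2'\|$ respectively.

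With this in hand the equivalence is immediate in the generic case $v\neq 0$: since $\|v\|>0$ one may cancel the common factor $\|v\|$, so $|\langle w_1,v\rangle| = |\langle w_2,v\rangle|$ holds if and only if $\|Pw_1'\| = \|Pw_2'\|$, that is, (1)~$\Leftrightarrow$~(2). For the final assertion I would use multiplicativity of the modulus: if $w_1 = cw_2$ with $|c|=1$, then $\langle w_1,v\rangle = c\langle w_2,v\rangle$, whence $|\langle w_1,v\rangle| = |\langle w_2,v\rangle|$, which is exactly condition (1); the equivalence just established then yields $\|Pw_1'\| = \|Pw_2'\|$.

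I expect no genuine obstacle here — this is a one-step corollary of the preceding proposition. The only points that require any care are bookkeeping: first, invoking conjugate symmetry so that Proposition \ref{p1}(5)--(6), stated for $\langle v,w\rangle$, legitimately applies to the expressions $\langle w_i,v\rangle$ appearing in (1); and second, disposing of the degenerate edge case $v=0$, where $S_v=\{0\}$ and $P=0$ so that $\langle w_i,v\rangle = 0$ and $Pw_i'=0$ force both (1) and (2) to hold trivially. Handling this case separately is what makes the cancellation of $\|v\|$ in the main argument fully justified.
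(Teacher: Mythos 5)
Your proof is correct and matches the paper's intent exactly: the corollary is stated there as an immediate consequence of Proposition \ref{p1}(5)--(6), which is precisely the identity $|\langle v,w\rangle|=\|v\|\,\|P_vw'\|$ that you invoke and then cancel. Your explicit handling of the degenerate case $v=0$ and the conjugate-symmetry bookkeeping are fine additions but do not change the argument.
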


\begin{lemma}
The rotation (multiplication by a unit complex scalar $cos\ \theta
+ i sin\ \theta$) of a complex vector $w = (a'_1 + ib'_1,...a'_n + ib'_n)$ is the same as taking $\cos(\theta)w$ + $\sin(\theta)iw$.

Hence 
\[(\cos(\theta)w + \sin(\theta)iw)' = \cos(\theta)w' +  \sin(\theta)w''.\]
\end{lemma}

\begin{remark}
It follows that the vectors obtained by multiplying $w$ by any unit norm complex scalar would associate with the points on the circle of radius $\|w\|$ in $S_w$
\end{remark}

\begin{theorem}\label{t20}
Given any nonzero vector $v$, and complex scalar 
$c=a+bi\not= 0$, $[(a+ib)v]' =av'+bv''$ and we have that
$S_v=S_{cv}$.
\end{theorem}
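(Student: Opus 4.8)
The plan is to compute $[(a+ib)v]'$ directly by exploiting the $\RR$-linearity of the map $v\mapsto v'$ together with part (7) of Proposition \ref{p1}. First I would record the elementary observation that, since taking real and imaginary parts of each coordinate and interleaving them is an $\RR$-linear operation, the prime map satisfies $(u+w)'=u'+w'$ and $(\lambda u)'=\lambda u'$ for every real scalar $\lambda$. Writing the complex scalar as $c=a+ib$ with $a,b\in\RR$ and decomposing $(a+ib)v=av+b(iv)$, linearity then gives $[(a+ib)v]'=av'+b(iv)'$. Applying part (7) of Proposition \ref{p1}, which states $(iv)'=v''$, yields $[(a+ib)v]'=av'+bv''$, establishing the first assertion.

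For the subspace equality, the idea is to express the two spanning vectors of $S_{cv}$ in the basis $\{v',v''\}$ of $S_v$ and verify that the resulting change of basis is invertible. I already have $(cv)'=av'+bv''$ from the first part. To handle the second generator, I would apply part (7) once more in the form $(cv)''=(i\cdot cv)'$; since $ic=-b+ia$, the first assertion (now with $a$ replaced by $-b$ and $b$ replaced by $a$) gives $(cv)''=-bv'+av''$. Consequently $S_{cv}=\text{span}\{av'+bv'',\,-bv'+av''\}$.

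Finally, I would note that the coefficient matrix $\left(\begin{smallmatrix} a & b \\ -b & a \end{smallmatrix}\right)$ relating $\{(cv)',(cv)''\}$ to $\{v',v''\}$ has determinant $a^2+b^2$, which is nonzero precisely because $c=a+ib\neq 0$. Hence this matrix is invertible, so $v'$ and $v''$ are in turn real linear combinations of $(cv)'$ and $(cv)''$. This forces $\text{span}\{(cv)',(cv)''\}=\text{span}\{v',v''\}$, i.e. $S_{cv}=S_v$.

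I do not anticipate a genuine obstacle: the argument is essentially a two-step application of $\RR$-linearity and Proposition \ref{p1}(7). The only points demanding care are the correct bookkeeping of the scalar $ic=-b+ia$ when computing $(cv)''$, and the observation that the hypothesis $c\neq 0$ is exactly what makes the $2\times 2$ change-of-basis matrix invertible, which is what upgrades the inclusion $S_{cv}\subseteq S_v$ to the full equality.
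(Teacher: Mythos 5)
Your proof is correct. It differs from the paper's in a useful way on the second assertion. For the identity $[(a+ib)v]'=av'+bv''$ the two arguments are essentially the same: the paper writes out the coordinates of $a v + b(iv)$ explicitly, while you package the same computation as $\RR$-linearity of the prime map together with Proposition \ref{p1}(7); either is fine. For the equality $S_v=S_{cv}$, the paper argues by double containment: it shows each of $(cv)'$, $(cv)''$ lies in $S_v$ (giving $S_{cv}\subseteq S_v$) and then asserts the reverse inclusion by essentially the same identification run in the other direction (implicitly via $v=c^{-1}(cv)$), though the written justification of that second containment is rather terse and never points to where $c\neq 0$ is used. You instead compute both generators of $S_{cv}$ in the basis $\{v',v''\}$, obtaining the coefficient matrix $\left(\begin{smallmatrix} a & b\\ -b & a\end{smallmatrix}\right)$ with determinant $a^2+b^2$, and conclude equality of spans from its invertibility. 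This buys a cleaner, one-shot argument that makes the role of the hypothesis $c\neq 0$ explicit; the only small point worth adding is that $\{v',v''\}$ is genuinely a basis of the $2$-dimensional space $S_v$ when $v\neq 0$, which follows from Proposition \ref{p1}(1) since $v'\perp v''$ and $\|v'\|=\|v''\|=\|v\|\neq 0$.
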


\begin{proof}
Let $V = span\ \{(cv)',(cv)''\}$.  We will show that $S_v=V$.
We first show $V$ is contained in $S_v$
We know $v$ identifies with $v'$ and $iv$ identifies with $v''$ hence both $v'$ and $(iv)'$ are in $S_v$. 
Next, given any vector of the form $av' + bv''$ we will
show that $av + ibv $ identifies with this vector.  
Clearly $av + ibv = a(a_1 + ib_1,...a_n + ib_n) + 
b(-b_1+ia_1,...-b_n+ia_n)$ which identifies with $av' + bv'' = a(a_1,b_1,...a_n,b_n) + b(-b_1,a_1,...-b_n,a_n)$
Hence $V$ is contained in $S_v$

Now given a complex scalar $(a+bi)$. As shown above $(a+bi)v$ identifies with $av' + bv''$ which is a vector in $V$.
Hence $S_v$ is contained in $V$.
It follows that  
$S_v=V$.
\end{proof}

We may define an equivalence relation on $\CC^n \setminus 0$ by
saying two vectors are {\it equivalent} if and only if one is a complex scalar multiple of the other.
It is clear that two vectors $v$ and $w$ are in the same equivalence class if and only if $S_v = S_w$.

\begin{theorem}\label{t11}
For any two subspaces $S_v$ and $S_w$, if $S_v \bigcap S_w \not= \{0\}$ then $S_v=S_w$.  Moreover, if $v \perp w$ then $S_v \perp S_w$.
\end{theorem}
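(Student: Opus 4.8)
The plan is to reduce both assertions to the single structural fact, recorded in Theorem \ref{t20}, that $S_v$ is exactly the image of the complex line $\CC v = \{cv : c \in \CC\}$ under the real-linear bijection $z \mapsto z'$ from $\CC^n$ to $\RR^{2n}$. Concretely, Theorem \ref{t20} shows that every element $av' + bv''$ of $S_v$ equals $[(a+bi)v]'$, so $S_v = \{z' : z \in \CC v\}$; and Proposition \ref{p1}(8), together with the defining formula for $z \mapsto z'$, shows this map is an $\RR$-linear bijection. Everything below runs on these two observations.

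For the first claim I would argue through this bijection. Since $z \mapsto z'$ is injective, image and intersection commute, so $S_v \cap S_w = (\CC v)' \cap (\CC w)' = (\CC v \cap \CC w)'$. Hence $S_v \cap S_w \neq \{0\}$ forces $\CC v \cap \CC w \neq \{0\}$, i.e. $v$ and $w$ share a nonzero complex multiple and therefore lie in the same equivalence class; by the observation just before the theorem this yields $S_v = S_w$. Equivalently and more concretely: choosing $0 \neq u \in S_v \cap S_w$ and writing $u = z'$ for the unique $z \in \CC^n$, membership of $u$ in $S_v$ forces $z \in \CC v$ while membership in $S_w$ forces $z \in \CC w$, so $v$ and $w$ are both nonzero complex multiples of $z$ and $S_v = S_z = S_w$.

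For the second claim I would pass through the norm identities of Proposition \ref{p1}. If $v \perp w$, then $\langle v, w\rangle = 0$, so by part (6) we get $\|v\|\,\|P_v w'\| = |\langle v,w\rangle| = 0$; since $v \neq 0$, this gives $P_v w' = 0$, i.e. $w' \perp S_v$. Because $\langle v, iw\rangle$ is a unimodular scalar multiple of $\langle v,w\rangle = 0$, we also have $v \perp iw$; applying part (6) to $iw$ and using $(iw)' = w''$ from part (7) yields $\|P_v w''\| = 0$, i.e. $w'' \perp S_v$. Since $S_w = \mathrm{span}\{w', w''\}$, both generators lie in $S_v^{\perp}$, and hence $S_v \perp S_w$.

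The bijectivity of $z \mapsto z'$ and the displayed manipulations are routine; the real content, and what I expect to be the crux, is the rigidity underlying the first claim, namely that two complex lines in $\CC^n$ either coincide or meet only at the origin, which is precisely what forbids a one-dimensional intersection of the $2$-planes $S_v$ and $S_w$. In the second claim the analogous subtlety is that the single complex equation $\langle v,w\rangle = 0$ must be upgraded to orthogonality of all four real pairs drawn from $\{v',v''\}$ and $\{w',w''\}$; this is handled cleanly by also invoking $v \perp iw$, which is the step deserving the most care.
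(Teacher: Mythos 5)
Your proposal is correct and takes essentially the same route as the paper: for the intersection claim the paper likewise picks $0 \neq x$ with $x' \in S_v \cap S_w$, writes $x = av$ and $x = bw$ using the identification of $S_v$ with $(\CC v)'$ coming from Theorem \ref{t20}, and concludes $S_v = S_w$ from the equivalence-class observation. For the ``moreover'' part the paper just cites Corollary \ref{cor} (orthonormal sets map to orthonormal sets, which rests on Proposition \ref{p1}(1) and (4)), while you rederive the same four real orthogonality relations from Proposition \ref{p1}(6) and (7) via $P_v w' = P_v w'' = 0$; this is only a cosmetic difference.
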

\begin{proof}
Assume there is a $0\not= x$ with $x' \in S_v \bigcap S_w \not= 0$. Then we have that $x = av$ and $x = bw$ for some nonzero complex scalars $a$ and $b$ and hence $v = a^{-1}bw$. From 
the above $v$ and $w$ are in the same equivalence class and
so $S_v = S_w$.

The moreover part follows from Corollary \ref{cor}.
\end{proof}

Putting this altogether,

\begin{theorem}
Given a set of complex vectors $\{v_i\}_{i=1}^{m} \in \CC^n$ which do phase retrieval in $\CC^n$, if we identify $\CC^n$ with $\RR^{2n}$, for any vector $w$, the 
only points that cannot be distinguished from $w'$ by projections onto $S_{v_i}$ are any points on the circle of radius $\|w\|$ in the subspace $S_w$ spanned by $w'$ and $w''$.
\end{theorem}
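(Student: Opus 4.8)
The plan is to reduce the statement entirely to the complex phase retrieval hypothesis by means of the dictionary established in Corollary~\ref{c11}. First I would note that, by Proposition~\ref{p1}(8), every point of $\RR^{2n}$ is of the form $u'$ for a unique $u\in\CC^n$. Hence it suffices to determine for which $u\in\CC^n$ the point $u'$ cannot be distinguished from $w'$ by the projections $\{P_{v_i}\}_{i=1}^m$; by the definition of distinguishability (and the Remark preceding Lemma on distinguishable vectors), this means precisely that $\|P_{v_i}u'\|=\|P_{v_i}w'\|$ for every $i$.

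For the forward inclusion, suppose $u'$ cannot be distinguished from $w'$. Applying Corollary~\ref{c11} with $v=v_i$, $w_1=u$, $w_2=w$ for each $i$, the equalities $\|P_{v_i}u'\|=\|P_{v_i}w'\|$ translate exactly into $|\langle u,v_i\rangle|=|\langle w,v_i\rangle|$ for all $i$. Since $\{v_i\}_{i=1}^m$ does phase retrieval in $\CC^n$, this forces $u=cw$ for some complex scalar with $|c|=1$. Writing $c=\cos\theta+i\sin\theta$, the rotation Lemma gives $(cw)'=\cos\theta\,w'+\sin\theta\,w''$, and by the Remark following that Lemma this point lies on the circle of radius $\|w\|$ inside $S_w$, spanned by $w'$ and $w''$. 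This is the desired conclusion.

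For the converse inclusion, any point on that circle is of the form $(cw)'$ with $|c|=1$, and the ``in particular'' clause of Corollary~\ref{c11} yields $\|P_{v_i}(cw)'\|=\|P_{v_i}w'\|$ for every $i$, so such a point cannot be distinguished from $w'$. Combining the two inclusions identifies the set of points undistinguishable from $w'$ precisely with the circle of radius $\|w\|$ in $S_w$.

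The argument is essentially bookkeeping once Corollary~\ref{c11} is in hand, so I do not anticipate a genuine obstacle. The only points requiring care are ensuring that every point of $\RR^{2n}$ is accounted for, which is exactly Proposition~\ref{p1}(8), and correctly translating the algebraic conclusion $u=cw$ into the geometric description of the circle inside $S_w$, which is supplied by the rotation Lemma together with the Remark identifying unit-scalar multiples of $w$ with that circle.
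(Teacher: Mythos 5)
Your proof is correct and is essentially the argument the paper intends: the theorem appears there under the heading ``Putting this altogether'' with no written proof, precisely because it is the assembly of Corollary~\ref{c11}, the phase retrieval hypothesis, and the rotation lemma with its accompanying remark that you carry out. Your only addition is making the converse inclusion and the parametrization of the circle in $S_w$ explicit, which is a faithful (and slightly more careful) rendering of what the paper leaves implicit.
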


\section{Complex Phase Retrieval and Rank Two Projections}

If we let $\{P_i\}_{i=1}^m$ be the projections onto $S_{v_i}$,
we see for any nonzero point $x$ and any $c \in \RR \setminus {0}$, $span\ \{P_ix\} = span\ \{P_i(cx)\}_{i=1}^m$. Hence when looking at the span of the projections we will identify a point with any scalar multiple of it. We see then that $y$ is in the orthogonal complement of the span if and only if $cy$ is in the orthogonal complement for any $c \in \RR \setminus {0}$. Hence when we look at vectors in the orthogonal complement we will identify a vector $y$ with any scalar multiple of it.

\begin{theorem}
Given a vector v $\in \CC^n$, let w = iv and $m = (\cos(\frac{\pi}{4}) + i\sin(\frac{\pi}{4}))v$. Then $v+w = \sqrt{2}m,$ and 
$ v'+w' = \sqrt{2}m'$.
\end{theorem}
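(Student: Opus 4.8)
The plan is to reduce both claimed identities to short computations built on the coordinate definition of the prime map together with Theorem \ref{t20} and part (7) of Proposition \ref{p1}. First I would dispatch the complex identity $v + w = \sqrt{2}m$. Writing $w = iv$ gives $v + w = v + iv = (1+i)v$, so it suffices to check $\sqrt{2}m = (1+i)v$. Since $m = (\cos\frac{\pi}{4} + i\sin\frac{\pi}{4})v$ and $\cos\frac{\pi}{4} = \sin\frac{\pi}{4} = \frac{1}{\sqrt{2}}$, we obtain $\sqrt{2}m = \sqrt{2}\cdot\frac{1}{\sqrt{2}}(1+i)v = (1+i)v$, which matches. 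This is a one-line scalar computation.

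For the second identity $v' + w' = \sqrt{2}m'$, the cleanest route stays closest to the results already in hand and computes each side separately. By Proposition \ref{p1}(7) we have $w' = (iv)' = v''$, so the left-hand side is $v' + w' = v' + v''$. For the right-hand side I would apply Theorem \ref{t20} with $a + ib = \cos\frac{\pi}{4} + i\sin\frac{\pi}{4}$, that is $a = b = \frac{1}{\sqrt{2}}$, to get $m' = [(a+ib)v]' = av' + bv'' = \frac{1}{\sqrt{2}}(v' + v'')$, whence $\sqrt{2}m' = v' + v''$. The two sides agree, which closes the argument.

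Alternatively one may observe that the prime map $u \mapsto u'$ is $\RR$-linear: from the coordinate definition it is immediate that $(u_1 + u_2)' = u_1' + u_2'$ and $(\lambda u)' = \lambda u'$ for real $\lambda$. Granting this, the second identity is merely the image of the first under the prime map, since $(v+w)' = v' + w'$ and $(\sqrt{2}m)' = \sqrt{2}m'$ by real homogeneity, and $v + w = \sqrt{2}m$ has already been established. I would mention this as the conceptual reason the two displayed equalities are not independent.

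The computation carries no serious obstacle; the only thing to watch is the bookkeeping between the complex description and the real coordinate description, which is exactly what Theorem \ref{t20} and Proposition \ref{p1}(7) are designed to control. In effect the content of the statement is that the prime map intertwines the complex scalar $\cos\frac{\pi}{4} + i\sin\frac{\pi}{4}$, scaled by $\sqrt{2}$, with the real operation $v' \mapsto v' + v''$, and both displayed equalities are manifestations of that single fact.
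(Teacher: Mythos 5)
Your proposal is correct and the first identity is established by exactly the same one-line scalar computation as the paper; the paper's proof stops there and leaves $v'+w'=\sqrt{2}m'$ implicit. Your explicit verification of the second identity via Proposition \ref{p1}(7) and Theorem \ref{t20} (or, equivalently, the $\RR$-linearity of the prime map) fills in a step the paper omits, and is the intended justification.
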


\begin{proof}
$v+w = (i+1)v = \sqrt{2}(\cos(\frac{\pi}{4}) + i\sin(\frac{\pi}{4}))v = \sqrt{2}m$
\end{proof}

\begin{corollary}\label{cc2}
For every nonzero $m' \in \RR^{2n}$ there exists nonzero $v'$ and $w'$ such that $w = iv$ (hence $w' \neq v'$) and $v' + w' = m'$
\end{corollary}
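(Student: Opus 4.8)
The plan is to \emph{invert} the construction of the preceding theorem. Two facts from Proposition \ref{p1} do all the work: part (7), which gives $(iv)' = v''$, so that the requirement $w = iv$ is equivalent to $w' = v''$ and hence $v' + w' = v' + v''$; and part (8), which says that the map $u \mapsto u'$ is a bijection from $\CC^n$ onto $\RR^{2n}$. This map is real-linear, as is immediate from the coordinate definition of $u'$. In particular the given nonzero $m'$ is $u'$ for a unique $u \in \CC^n$, and $u \neq 0$ since $m' \neq 0$.

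With $u$ in hand, I would set
\[ v = \frac{1-i}{2}\,u \qquad\text{and}\qquad w = iv,\]
which immediately realizes $w = iv$ as required. To verify the defining equation, note that $(1+i)\cdot\tfrac{1-i}{2} = 1$, so $v + w = (1+i)v = u$; applying the additive map $(\cdot)'$ then yields $v' + w' = (v+w)' = u' = m'$, which is exactly the desired identity. (This is just the preceding theorem read backwards, with its factor of $\sqrt2$ absorbed into the choice of scalar defining $v$.) I emphasize that only additivity of $(\cdot)'$ is used here, not compatibility with complex scaling, which indeed fails since $(iv)'=v''\neq iv'$.

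It remains to record the nonvanishing and distinctness statements. Since $u \neq 0$ we get $v \neq 0$, hence $w = iv \neq 0$; moreover $w \neq v$ because $i \neq 1$, so by the injectivity in Proposition \ref{p1}(8) we obtain $w' \neq v'$ (alternatively this is visible directly from $w' = v''$ together with $v' \perp v''$ in Proposition \ref{p1}(1)). I expect no genuine obstacle in this argument: the one point requiring any care is the legitimacy of \emph{solving} for $v$, and this is guaranteed precisely because $u \mapsto u'$ is a bijection, which lets the complex division by $1+i$ in $\CC^n$ be transported cleanly back to $\RR^{2n}$.
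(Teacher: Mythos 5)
Your proof is correct and is essentially the paper's own argument: the scalar $\tfrac{1-i}{2}$ you choose is exactly $\tfrac{\cos(\pi/4)-i\sin(\pi/4)}{\sqrt{2}}$, which is precisely the choice the paper makes when it inverts the preceding theorem. Your direct verification that $(1+i)\cdot\tfrac{1-i}{2}=1$ and the explicit appeal to additivity of $u\mapsto u'$ just make transparent what the paper leaves implicit.
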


\begin{proof}
This follows from the theorem above by taking $v = \dfrac{\cos(\frac{\pi}{4}) - i\sin(\frac{\pi}{4})}{\sqrt{2}}m$.
\end{proof}

\begin{proposition}
Given a vector $v\in \CC^n$ with projection $P$ onto $S_{v}$,
for every $w' \in \RR^{2n}$, $Pw' \perp w''$.
\end{proposition}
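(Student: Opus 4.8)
The goal is to show $\langle P w', w''\rangle = 0$ for the rank-2 projection $P=P_v$ onto $S_v=\mathrm{span}\{v',v''\}$. By Proposition~\ref{p1}(8) every vector of $\RR^{2n}$ is $w'$ for a unique $w\in\CC^n$, and $w''=(iw)'$ is then well defined by part~(7), so the statement makes sense. I would first reduce to a cross-inner-product computation. Since $v'\perp v''$ and $\|v'\|=\|v''\|=\|v\|$ by Proposition~\ref{p1}(1), the set $\{v',v''\}$ is an orthogonal basis of $S_v$, giving
\[
Pw'=\frac{\langle w',v'\rangle}{\|v\|^2}\,v'+\frac{\langle w',v''\rangle}{\|v\|^2}\,v''.
\]
Hence $\|v\|^2\langle Pw',w''\rangle=\langle w',v'\rangle\,\langle v',w''\rangle+\langle w',v''\rangle\,\langle v'',w''\rangle$, and everything rests on the two quantities $\langle v',w''\rangle$ and $\langle v'',w''\rangle$.

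The approach I prefer is conceptual, using the complex structure $J\colon\RR^{2n}\to\RR^{2n}$ defined by $Jw':=w''$, which by Proposition~\ref{p1}(7) is exactly multiplication by $i$ on the $\CC^n$ side. It is orthogonal (by Proposition~\ref{p1}(1), $\|w''\|=\|iw\|=\|w\|=\|w'\|$) and satisfies $J^2=-I$ since $(i\cdot iw)'=(-w)'=-w'$; an orthogonal map with $J^2=-I$ is skew-symmetric, $J^{\mathsf T}=J^{-1}=-J$. Because $Jv'=v''$ and $Jv''=-v'$, the plane $S_v$ is $J$-invariant, and orthogonality of $J$ makes $S_v^\perp$ invariant as well, so $P$ commutes with $J$. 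Then, writing $w''=Jw'$ and using $P=P^{\mathsf T}$, $PJ=JP$, and $J^{\mathsf T}=-J$,
\[
\langle Pw',Jw'\rangle=\langle w',PJw'\rangle=\langle w',JPw'\rangle=-\langle Jw',Pw'\rangle=-\langle Pw',Jw'\rangle,
\]
which forces $\langle Pw',w''\rangle=0$.

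If one wishes to stay in coordinates, I would instead evaluate the two cross terms directly from Proposition~\ref{p1}: part~(3) gives $\langle v'',w''\rangle=\langle v',w'\rangle$, while an antisymmetry check against the formula in part~(2) gives $\langle v',w''\rangle=-\langle w',v''\rangle$; substituting these into the displayed expression above makes its two summands exact negatives of each other, so they cancel. I expect the only real obstacle to be bookkeeping with signs, specifically getting $\langle v',w''\rangle=-\langle w',v''\rangle$ correct, since this is precisely the antisymmetry encoded by $J$. The conceptual route avoids even this, as the skew-symmetry of $J$ produces the cancelling sign automatically; its one point requiring care is the identity $PJ=JP$, which follows from the $J$-invariance of both $S_v$ and $S_v^\perp$.
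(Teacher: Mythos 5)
Your proof is correct, but it takes a genuinely different route from the paper's. The paper argues via its phase-retrieval machinery: it invokes Corollary \ref{cc2} to write $w'=x'+y'$ with $x=iy$, notes that $x$ and $y$ differ by a unimodular scalar so $\|Px'\|=\|Py'\|$, deduces $x'-y'\perp P(x'+y')=Pw'$ from the polarization identity $\langle P(x'+y'),x'-y'\rangle=\|Px'\|^2-\|Py'\|^2$, and finally identifies $x'-y'$ as a multiple of $w''$ because it lies in $S_w=S_x$ (Theorem \ref{t11}) and is orthogonal to $w'$. Your main argument instead isolates the structural reason: the complex structure $J$ with $Jw'=w''$ is orthogonal and satisfies $J^2=-I$, hence is skew-symmetric; $S_v$ is $J$-invariant, so $P$ commutes with $J$, and skewness forces $\langle Pw',Jw'\rangle=0$. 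This is cleaner and strictly more general --- it yields the analogous orthogonality for the projection onto \emph{any} $J$-invariant subspace, which is exactly the setting of Proposition \ref{p5} later in the paper --- whereas the paper's version stays tied to the ``indistinguishable pairs'' computation it is developing for phase retrieval. Your coordinate fallback is also sound: the identities $\langle v'',w''\rangle=\langle v',w'\rangle$ and $\langle v',w''\rangle=-\langle w',v''\rangle$ both check out against Proposition \ref{p1}(2)--(3), and the two summands of $\|v\|^2\langle Pw',w''\rangle$ cancel exactly as you claim. The two points you flag as delicate ($PJ=JP$ and the sign in the antisymmetry) are handled correctly, so there is no gap.
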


\begin{proof}
By Corollary \ref{cc2}, there exist $0\not= x,y\in \CC^n$ so that
$x=iy$ and $x'+y'=w'$.  Hence, $\|x'\|=\|y'\|$ and $|\langle x,v\rangle|
=|\langle y,v\rangle|$.  So $y'\in S_x$.  Hence, $w'\in S_x$ and
so by Theorem \ref{t11}, $S_w=S_x$.  Now, 
\[ \langle x'-y',x'+y'\rangle =\|x'\|-\|y'\| =0
\mbox{ and } x'-y' \in S_w.\]
But, $w''$ (and its multiples) are the only vector in $S_w$ orthogonal to $w'$.  So $w''=x'-y' \perp P(x'+y')=Pw'$.   
\end{proof}

\begin{corollary}
Given vectors $\{v_j\}_{j=1}^m$ in $\CC^n$ with projections 
$\{P_j\}_{j=1}^m$ onto $\{S_{v_j}\}_{j=1}^m$ and
given any nonzero $w' \in \RR^{2n}$ let $M = span\ \{P_jw'\}_{j=1}^m$. Then $w'' \perp M$.
\end{corollary}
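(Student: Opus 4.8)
The plan is to reduce this directly to the preceding Proposition, which already disposes of the case of a single projection. First I would apply that Proposition to each $v_j$ separately: taking $P = P_j$, the projection onto $S_{v_j}$, and keeping the same fixed $w' \in \RR^{2n}$, the Proposition yields $P_j w' \perp w''$ for every $1 \le j \le m$. The point worth emphasizing is that the conclusion $Pw' \perp w''$ of the Proposition is stated for an arbitrary generating complex vector $v$, so nothing in its argument is tied to a particular $v_j$; in particular the distinguished vector $w''$ is determined by $w'$ alone and is the same across all $j$.

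Once each generator $P_j w'$ of $M$ has been shown orthogonal to $w''$, orthogonality to the entire span follows by linearity of the inner product. Concretely, any $m \in M$ can be written as $m = \sum_{j=1}^m c_j P_j w'$ for scalars $c_j \in \RR$, whence $\langle w'', m\rangle = \sum_{j=1}^m c_j \langle w'', P_j w'\rangle = 0$, so $w'' \perp M$ as claimed. I do not anticipate a genuine obstacle here: all the substantive geometric content — namely recognizing $w''$ (up to scalar) as the unique vector in $S_w$ orthogonal to $w'$, and showing that each rank two projection annihilates the $w''$-direction of $w'$ — was already carried out in the Proposition. The corollary is then just the standard observation that a vector orthogonal to each member of a finite spanning set is orthogonal to the subspace they span.
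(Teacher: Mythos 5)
Your proposal is correct and matches the paper's intent exactly: the paper states this as an immediate corollary of the preceding Proposition and omits the proof, since applying $P_jw'\perp w''$ for each $j$ and extending to the span by linearity is all that is needed. Nothing further is required.
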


\begin{theorem}\label{t12}
Given vectors $\{v_j\}_{j=1}^m$ in $\CC^n$ which do phase
retrieval with projections 
$\{P_j\}_{j=1}^m$ onto $\{S_{v_j}\}_{j=1}^m$ and
given any nonzero $w' \in \RR^{2n}$ let $M = span\ \{P_jw'\}_{j=1}^m$. Then $M^{\perp}=span \{w''\}$.
\end{theorem}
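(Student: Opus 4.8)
The plan is to prove the two inclusions of the claimed equality separately. One inclusion, $\text{span}\{w''\} \subseteq M^{\perp}$, is precisely the content of the corollary immediately preceding this theorem, so the whole task reduces to proving $M^{\perp} \subseteq \text{span}\{w''\}$; that is, I must show that every $Y \in M^{\perp}$ is a real scalar multiple of $w''$. Fix such a $Y$ and set $A = \tfrac{1}{2}(w' + Y)$ and $B = \tfrac{1}{2}(w' - Y)$, so that $A + B = w'$ and $A - B = Y$.

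Since $Y \in M^{\perp}$ and $M = \text{span}\{P_j w'\}_{j=1}^m$, Theorem \ref{t2} (applied in $\RR^{2n}$ with base point $x = w'$, the rank two projections playing the role of the $P_i$) yields $\|P_j A\| = \|P_j B\|$ for all $j$. Now I invoke the identification of Section 4: by Proposition \ref{p1}(8) I may write $A = a'$ and $B = b'$ for unique $a,b \in \CC^n$, and since $v \mapsto v'$ is an $\RR$-linear bijection and $a' + b' = w'$, I get $a + b = w$. By Corollary \ref{c11} the equalities $\|P_j a'\| = \|P_j b'\|$ are equivalent to $|\langle a, v_j\rangle| = |\langle b, v_j\rangle|$ for every $j$, and because $\{v_j\}_{j=1}^m$ does phase retrieval in $\CC^n$, this forces $a = cb$ for some complex scalar $c$ with $|c| = 1$.

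From here two consequences pin $Y$ down. First, $|c| = 1$ gives $\|a\| = \|b\|$, hence $\|A\| = \|a'\| = \|b'\| = \|B\|$; by the Note in the Preliminaries this is equivalent to $A - B \perp A + B$, i.e. $Y \perp w'$. Second, since $w = a + b = (1+c)b$ is nonzero we have $c \neq -1$, so $b = (1+c)^{-1} w$ and therefore $a - b = (c-1)b = \tfrac{c-1}{1+c}\,w$ is a complex scalar multiple of $w$; by Theorem \ref{t20} this means $Y = (a-b)'$ lies in $S_w = \text{span}\{w', w''\}$. Combining the two facts, $Y$ is a vector in the two dimensional space $S_w$ that is orthogonal to $w'$, and since $w' \perp w''$ the only such vectors are multiples of $w''$. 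Thus $Y \in \text{span}\{w''\}$, which establishes $M^{\perp} \subseteq \text{span}\{w''\}$ and hence the theorem. I expect the only delicate point to be the careful bookkeeping of the identification $v \mapsto v'$ together with the degenerate scalar cases ($c = \pm 1$, the latter ruled out by $w \neq 0$); the genuinely decisive step is recognizing that $|c| = 1$ simultaneously confines $Y$ to the plane $S_w$ and forces $Y \perp w'$, leaving only the line $\text{span}\{w''\}$.
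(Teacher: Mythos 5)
Your proposal is correct and follows essentially the same route as the paper's own proof: decompose $w'$ as a sum $A+B$ with $A-B=Y\in M^{\perp}$, deduce $\|P_jA\|=\|P_jB\|$, transfer to $|\langle a,v_j\rangle|=|\langle b,v_j\rangle|$ via Corollary \ref{c11}, apply phase retrieval to get $a=cb$ with $|c|=1$, and conclude that $Y$ lies in $S_w$ and is orthogonal to $w'$, hence is a multiple of $w''$. The only differences are cosmetic (you cite Theorem \ref{t2} rather than redoing the inner-product identity, and you locate $Y$ in $S_w$ by the explicit scalar $\tfrac{c-1}{1+c}$ rather than via Theorem \ref{t11}), and your handling of the degenerate scalar cases is if anything slightly more careful than the paper's.
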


\begin{proof}
Given $v' \in M^{\perp}$, there exists
 two vectors $x'$ and $y'$ with $x'+y'=w'$ and $x'-y'=v'$.
 It follows that
 \[ 0 = \langle P_j(x'+y'),x'-y'\rangle = 
 \langle P_j(x'+y'),P_j(x'-y')\rangle = \|P_jx'\|^2-\|P_jy'\|^2.\]
 So $\|P_jx'\|=\|P_jy'\|$ and
 by Corollary \ref{c11}, we have that $|\langle x',v_j\rangle|
 =|\langle y',v_j\rangle|$ for all $j=1,2,\ldots,m$.  Therefore,
 since we have phase retrieval,
 $x=cy$ for some $|c|=1$ and hence
 $y' \in S_x$ and $\|y'\| = \|x'\| \neq 0$. Since $S_x$ is a subspace $x' + y' = w',x'-y' \in S_x$.  By Theorem \ref{t11}
$S_w=S_x$.  But, $\{w',w''\}$ is an orthonormal basis for
$S_w$ and $x'+y'=w'\perp x'-y'$.  It follows that $x'-y'$ is a 
multiple of $w''$. Hence, $w'' \perp M$ and its scalar multiples
are the only vectors orthogonal to M.
\end{proof}

Now we will see how to reformulate complex phase retrieval by
vectors in $\CC^n$ into a variant of real phase retrieval for our class of
rank two projections on $\RR^{2n}$.

\begin{theorem}\label{t21}
Given a set  of complex vectors $\{v_j\}_{j=1}^{m} \in \CC^n$
let $\{P_j\}_{j=1}^m$ be the corresponding projections onto
$S_{v_j}$. The following are equivalent:
\begin{enumerate}
\item $\{v_j\}_{j=1}^n$ does phase retrieval.
\item
For each point $x' \in \RR^{2n}$, $x''$ is the only vector 
in $\RR^{2n}$ orthogonal to $M=span\ \{P_ix'\}_{i=1}^m$.
\item  For each point $x' \in \RR^{2n}$, $M=span\ \{P_ix'\}_{i=1}^m$
is a hyperplane.
\end{enumerate}
\end{theorem}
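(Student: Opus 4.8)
The plan is to prove the chain of equivalences $(1)\Leftrightarrow(2)\Leftrightarrow(3)$ by assembling the results already established in this section, treating the three conditions as statements about the geometry of the rank-2 projections $P_j$. Most of the work has in fact been front-loaded into Theorem~\ref{t12}, so the main task is organizing the implications cleanly and filling the small gap in the reverse direction.

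First I would dispose of the easy equivalence $(2)\Leftrightarrow(3)$. By the proposition preceding Theorem~\ref{t12}, we always have $x''\perp M$ for every $x'\in\RR^{2n}$, so $x''$ (and its scalar multiples) is guaranteed to lie in $M^\perp$. Since $\dim M^\perp = 2n-\dim M$, the assertion that $x''$ spans $M^\perp$ is precisely the assertion that $\dim M^\perp = 1$, i.e.\ that $M$ is a hyperplane. Thus $(2)$ and $(3)$ are two phrasings of the same dimension count, and the equivalence is immediate once I invoke the fact that $x''\neq 0$ whenever $x'\neq 0$ (which follows from part (1) of Proposition~\ref{p1}, since $\|x''\|=\|x'\|=\|x\|$).

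Next I would prove $(1)\Rightarrow(2)$, which is exactly the content of Theorem~\ref{t12}: assuming $\{v_j\}$ does phase retrieval, that theorem shows $M^\perp = \mathrm{span}\{x''\}$ for every nonzero $x'$, which is statement $(2)$ verbatim (after relabeling $w'$ as $x'$). So this implication requires essentially no new argument beyond citing Theorem~\ref{t12} and handling the trivial case $x'=0$.

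The substantive direction is $(2)\Rightarrow(1)$ (equivalently $(3)\Rightarrow(1)$), and this is where I expect the main obstacle to lie. I would argue the contrapositive: suppose $\{v_j\}$ fails phase retrieval, so there exist $x,y\in\CC^n$ with $|\langle x,v_j\rangle| = |\langle y,v_j\rangle|$ for all $j$ but $x\neq cy$ for every $|c|=1$. Set $w' = \tfrac{1}{2}(x'+y')$ and $u' = \tfrac{1}{2}(x'-y')$, so that $x' = w'+u'$ and $y' = w'-u'$. By Corollary~\ref{c11} the hypothesis $|\langle x,v_j\rangle|=|\langle y,v_j\rangle|$ gives $\|P_j x'\| = \|P_j y'\|$, and the computation in Lemma~\ref{lemma} (or the polarization identity displayed in Theorem~\ref{t12}) shows $\langle P_j w', u'\rangle = \tfrac14(\|P_j x'\|^2 - \|P_j y'\|^2) = 0$ for all $j$, whence $u'\perp M$ where $M = \mathrm{span}\{P_j w'\}$. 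The crux is then to show that $u'$ is \emph{not} a scalar multiple of $w''$, for this produces a second independent vector in $M^\perp$ and so violates $(2)$. Here I would translate back: if $u'$ were a multiple of $w''$, then $u = cw$ for a real scalar $c$ (using $w''=(iw)'$ and part (7) of Proposition~\ref{p1}), giving $x = w+u = (1+ic)w$ and $y = w-u = (1-ic)w$, so $x$ and $y$ are unimodular scalar multiples of each other after normalization, contradicting $x\neq cy$. The care needed is to treat the degenerate possibilities $w=0$ or $u=0$ separately, since those correspond to $x=\pm y$ and must be checked directly against the failure of phase retrieval. Handling these edge cases correctly, and making the passage between the real vector $u'\parallel w''$ and the complex relation $u = (\text{real scalar})\,w$ rigorous, is the delicate part of the argument.
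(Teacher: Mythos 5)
Your proposal is correct and follows essentially the same route as the paper: $(1)\Rightarrow(2)$ by citing Theorem~\ref{t12}, $(2)\Leftrightarrow(3)$ as a dimension count using the fact that $x''$ always lies in $M^{\perp}$, and $(2)\Rightarrow(1)$ via the polarization identity $\langle P_i(x'+y'),x'-y'\rangle=\|P_ix'\|^2-\|P_iy'\|^2$ followed by the observation that $x'-y'$ being forced into $\mathrm{span}\{(x+y)''\}$ makes $x$ and $y$ unimodular multiples of one another. The only differences are cosmetic --- you run $(2)\Rightarrow(1)$ as a contrapositive where the paper argues directly (and you are more careful about the degenerate cases $x=\pm y$) --- apart from the harmless slip that $u'=cw''$ gives $u=icw$ rather than $u=cw$, which your subsequent formulas $x=(1+ic)w$, $y=(1-ic)w$ already use correctly.
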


\begin{proof}
$(1)\Rightarrow (2)$:  This is Theorem \ref{t12}.

$(2) \Rightarrow (1)$: 
Choose vectors $v,w$ with 
\begin{equation}\label{E5}
|\langle v,v_i\rangle|=|\langle w,v_i\rangle|
\end{equation} for all
$i=1,2,\ldots,m$.  We first need to observe that
$\langle v,v_i\rangle \not=0$ for some i.  For
otherwise we would have that $v'\in M^{\perp}$ and
since $v' \perp v'' \in M^{\perp}$, this contradicts
our assumption that $dim\ M^{\perp}=1$.
 By Corollary \ref{c11}, $\|P_iv'\|=\|P_iw'\|$.
It follows that
\[ \langle P_i(v'+w'),v'-w'\rangle=
\langle P_i(v'+w' ),P_i(v'-w')\rangle = \|P_iv'\|^2-\|P_iw'\|^2
=0.\]
 Hence, $v'-w' \perp P_ix'=P_i(v'+w')$ for all $i=1,2,\ldots,m$.  By assumption
 (2), $v'-w'=cw''\in S_w$ and so $v'\in S_w$.  By Theorem \ref{t11},
 $S_v=S_w$ and so $v=cw$ for some c.  Now by Equation \ref{E5},
 $|c|=1$.
 
 $(2)\Leftrightarrow (3)$:  This is clear.
\end{proof}

\begin{corollary}
Let $\{v_i\}_{i=1}^m$ do phase retrieval on $\CC^n$ and for
$x\in \CC^n$ let $I=\{i:\langle x,v_i\rangle \not= 0\}$. Then
\begin{enumerate}
\item $|I|\ge 2n-1$.
\item $span \ \{v_i\}_{i\in I}=\CC^n$.
\end{enumerate}
\end{corollary}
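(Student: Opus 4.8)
The plan is to derive both statements from Theorem \ref{t21} together with the correspondence results in Proposition \ref{p1}. The key translation is that for a vector $v_i\in\CC^n$ with projection $P_i$ onto $S_{v_i}$, the condition $\langle x,v_i\rangle=0$ is equivalent to $P_ix'=0$; indeed by part (5)--(6) of Proposition \ref{p1}, $\|P_ix'\|=\|v_i\|\,|\langle x,v_i\rangle|$ (up to the normalization), so $P_ix'=0$ exactly when $x'\perp S_{v_i}$, i.e.\ when $\langle x,v_i\rangle=0$. Thus the index set $I=\{i:\langle x,v_i\rangle\neq 0\}$ is precisely the set of indices for which $P_ix'\neq 0$, and $M=\mathrm{span}\{P_ix'\}_{i=1}^m=\mathrm{span}\{P_ix'\}_{i\in I}$.

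For part (1), I would first invoke Theorem \ref{t21}: since $\{v_i\}$ does phase retrieval, $M$ is a hyperplane in $\RR^{2n}$, so $\dim M=2n-1$. Each nonzero $P_ix'$ with $i\in I$ lies in the two-dimensional subspace $S_{v_i}$, and in fact by the Proposition preceding Theorem \ref{t12}, $P_ix'\perp x''$, so each $P_ix'$ contributes at most a one-dimensional increment beyond what is forced, but the clean bound comes from the rank constraint: each projection $P_i$ has rank $2$, so $\dim\mathrm{span}\{P_ix'\}_{i\in I}\le 2|I|$. Actually the sharper route is that each single $P_ix'$ spans at most a line (it is one vector), so $\dim M\le |I|$; combining $\dim M=2n-1\le |I|$ gives $|I|\ge 2n-1$ immediately.

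For part (2), the plan is to show $\mathrm{span}\{v_i\}_{i\in I}=\CC^n$ by contradiction. Suppose not; then there is a nonzero $u\in\CC^n$ with $u\perp v_i$ for all $i\in I$, meaning $\langle u,v_i\rangle=0$ for $i\in I$. By part (2) of Proposition \ref{p1} this says $u'\perp S_{v_i}$, hence $P_iu'=0$ for all $i\in I$; but for $i\notin I$ we have $P_ix'=0$, so in particular the vectors $\{P_ix'\}$ all lie in $(\mathrm{span}\{u',u''\})^{\perp}$ once one checks $u''$ is also orthogonal to every $P_ix'$. The cleanest argument uses Theorem \ref{t12} directly: $M^\perp=\mathrm{span}\{x''\}$ is one-dimensional, so any vector orthogonal to all $P_ix'$ must be a multiple of $x''$. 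Since $u'\perp S_{v_i}$ forces $u'\perp P_ix'$ for every $i$ (as $P_ix'\in S_{v_i}$), we conclude $u'$ is a multiple of $x''$; but $x''=(ix)'$, so $u'$ is a multiple of $(ix)'$, i.e.\ $u=c\,ix$ for a scalar, which is a nonzero vector — this need not yet be a contradiction, so I must instead use that $u\perp v_i$ for $i\in I$ while $\langle x,v_i\rangle\neq 0$ for $i\in I$, and $u$ parallel to $ix$ gives $\langle u,v_i\rangle=\overline{c}\langle ix,v_i\rangle$ which is nonzero, contradicting $\langle u,v_i\rangle=0$.

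The main obstacle I anticipate is part (2): the naive orthogonality argument lands $u'$ in $\mathrm{span}\{x''\}$ rather than immediately at zero, so the contradiction must be extracted by comparing the two descriptions of the index set $I$ — namely that $u\perp v_i$ on $I$ forces $\langle u,v_i\rangle=0$ there, while $u$ being a nonzero multiple of $ix$ forces $\langle u,v_i\rangle$ to be a nonzero multiple of $\langle ix,v_i\rangle=i\langle x,v_i\rangle\neq 0$ for $i\in I$. Reconciling these requires care about which projections vanish, and verifying that $u''$ as well as $u'$ lands in $M^\perp$, which is where Theorem \ref{t12} and the $P_ix'\perp x''$ proposition do the real work.
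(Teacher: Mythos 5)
Your proposal is correct and follows the same overall architecture as the paper: translate everything to the rank-2 projections $P_i$ on $\RR^{2n}$ via $|\langle x,v_i\rangle|=\|v_i\|\,\|P_ix'\|$, so that $I=\{i:P_ix'\neq 0\}$ and $M=\mathrm{span}\{P_ix'\}_{i\in I}$, and then invoke Theorem \ref{t21}. Part (1) is essentially identical to the paper's argument ($M$ is a hyperplane of dimension $2n-1$ spanned by at most $|I|$ vectors). Part (2) differs in the endgame. The paper observes that if $u\perp v_i$ for all $i\in I$ then \emph{both} $u'$ and $u''=(iu)'$ kill every $P_ix'$ (since $iu\perp v_i$ as well), so $\dim M^{\perp}\ge 2$, directly contradicting the hyperplane condition of Theorem \ref{t21}(3); no identification of $M^{\perp}$ is needed. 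You instead place only $u'$ in $M^{\perp}$, use the sharper statement $M^{\perp}=\mathrm{span}\{x''\}$ from Theorem \ref{t12} to conclude $u=c\,ix$ with $c\in\RR\setminus\{0\}$, and then derive the contradiction $0=\langle u,v_i\rangle=ci\langle x,v_i\rangle\neq 0$ for $i\in I$ (using $I\neq\emptyset$, which follows from part (1)). Both finishes are valid; the paper's is slightly more economical because the check you flagged as the main obstacle --- that $u''$ also lands in $M^{\perp}$ --- is immediate from $u''=(iu)'$ and $iu\perp v_i$, and it removes the need for the final inner-product computation and the nonemptiness of $I$.
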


\begin{proof}
(1)  We proceed by way of contradiction.  Let $P_i$ be the rank
two projection in $\RR^{2n}$ onto $S_{v_i}$.  
If $|I|\le 2n-2$, then for $i\in I^c$, $\|P_ix'\|=|\langle x,v_i\rangle|=0$.  It follows that $|\{i:P_ix' \not= 0\}|\le 2n-2$ and
hence $\{P_ix'\}_{i=1}^m$ does not span a hyperplane in $\RR^{2n}$ contradicting
Theorem \ref{t21}.

(2)   Let $P_i$ be the rank
two projection in $\RR^{2n}$ onto $S_{v_i}$. 
We proceed by way of contradiction.
So assume there exists
$y\in \CC^n$ with $y\perp v_i$ for all $i\in I$.  Then,
$P_iy'=0$ for all $i\in I$.  Since $y''=(iy)'$, it follows
that $P_iy''=0$ for all $i\in I$.  Also, $P_ix=0$ for all $i\in I^c$.
So
\[ M=span\ \{P_ix'\}_{i=1}^m=span\ \{P_ix'\}_{i\in I}.\]
It follows that $y',y'' \perp M$ and so
M is not a hyperplane in $\RR^{2n}$ contradicting Theorem
\ref{t21}. 

\end{proof} 

\begin{corollary}
If $\{v_i\}_{i=1}^{4n-4}$ does phase retrieval in $\CC^n$ and
$I\subset [4n-4]$ with $|I|=2n-2$, then $\{v_i\}_{i\in I}$
and $\{v_i\}_{i\in I^c}$ both span
$\CC^n$.
\end{corollary}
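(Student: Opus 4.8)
The plan is to derive this directly from the preceding corollary, which guarantees that for a family doing phase retrieval and any nonzero $x \in \CC^n$, the index set $\{i : \langle x,v_i\rangle \neq 0\}$ has cardinality at least $2n-1$. The key numerical observation is purely arithmetic: since $\abs{I}=2n-2$ and the family has $4n-4$ vectors, the complement satisfies $\abs{I^c} = (4n-4)-(2n-2) = 2n-2$. Thus the two index sets have the same size, and the whole situation is symmetric in $I$ and $I^c$, so it suffices to prove the claim for $I$ and then swap roles.

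First I would argue by contradiction. Suppose $\{v_i\}_{i\in I}$ fails to span $\CC^n$. Then I can pick a nonzero $x\in\CC^n$ orthogonal to $\mathrm{span}\,\{v_i\}_{i\in I}$, so that $\langle x,v_i\rangle = 0$ for every $i\in I$. Consequently the set $J = \{i : \langle x,v_i\rangle \neq 0\}$ is contained in $I^c$, which gives $\abs{J}\le \abs{I^c} = 2n-2$.

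But part (1) of the preceding corollary applied to this $x$ forces $\abs{J}\ge 2n-1$, contradicting $\abs{J}\le 2n-2$. Hence $\{v_i\}_{i\in I}$ must span $\CC^n$. Because $\abs{I^c}=\abs{I}=2n-2$, running the identical argument with $I^c$ in place of $I$ shows $\{v_i\}_{i\in I^c}$ spans $\CC^n$ as well. There is no real obstacle here beyond the counting step; the only point worth checking is the arithmetic $\abs{I^c}=2n-2$, which is exactly what keeps the complement strictly below the $2n-1$ threshold and makes the contradiction go through.
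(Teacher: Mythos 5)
Your proof is correct and is exactly the intended argument: the paper states this corollary without proof as an immediate consequence of the preceding corollary, and your contradiction via a nonzero $x$ orthogonal to $\mathrm{span}\,\{v_i\}_{i\in I}$ together with the count $|I^c|=2n-2<2n-1$ is precisely how that implication goes.
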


To prove the complex analog of Theorem \ref{T1}, we need
a result.

\begin{proposition}\label{p5}
Let $W$ be a d-dimensional subspace of $\CC^n$.  Then there is a
2d-dimensional subspace $V$ of $\RR^{2n}$ so that for every
orthonormal basis $\{v_i\}_{i=1}^d$ for $W$ we have
$span\ \{S_{v_i}\}_{i=1}^d = V$.  Recall that the $\{S_{v_i}\}_{i=1}^m$ is an
orthogonal set of 2-dimensional subspaces of $\RR^{2n}$.
\end{proposition}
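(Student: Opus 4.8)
The plan is to exhibit $V$ as an object attached intrinsically to $W$, so that basis-independence is automatic, and then verify that every orthonormal basis produces this same span. Specifically, I would set $V = \{w' : w \in W\}$, the image of $W$ under the $\RR$-linear identification $\CC^n \to \RR^{2n}$, $v \mapsto v'$, whose bijectivity onto $\RR^{2n}$ is recorded in Proposition \ref{p1}(8). Since this map is real-linear and injective, $V$ is a real subspace of $\RR^{2n}$; crucially, its definition makes no reference to any basis of $W$. This is the whole point, because the basis-independence demanded by the statement is then built in from the start, and $\dim_{\RR} V = \dim_{\RR} W = 2d$.

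Next I would fix an arbitrary orthonormal basis $\{v_i\}_{i=1}^d$ of $W$ and check that $\{v_i, iv_i\}_{i=1}^d$ is an $\RR$-basis for $W$ regarded as a $2d$-dimensional real vector space. Linear independence over $\RR$ is immediate: a real relation $\sum_i a_i v_i + \sum_i b_i (iv_i) = 0$ rewrites as $\sum_i (a_i + i b_i) v_i = 0$, which forces every $a_i + i b_i = 0$ by complex independence of the $v_i$, hence $a_i = b_i = 0$. As there are $2d$ such vectors and $\dim_{\RR} W = 2d$, they form a real basis of $W$.

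Applying the identification together with Proposition \ref{p1}(7), which gives $(iv_i)' = v_i''$, I would conclude that $V = \operatorname{span}_{\RR}\{v_i', v_i''\}_{i=1}^d = \operatorname{span}\{S_{v_i}\}_{i=1}^d$, the last equality holding because $S_{v_i} = \operatorname{span}\{v_i', v_i''\}$ by definition. Since the left-hand side $V$ was defined without reference to the basis, this equality holds for every orthonormal basis $\{v_i\}_{i=1}^d$, exactly as required. The dimension count and the \emph{recall} clause then follow from Corollary \ref{cor}: the vectors $\{v_i', v_i''\}_{i=1}^d$ are orthonormal, so they are $2d$ linearly independent vectors spanning $V$, and the subspaces $S_{v_i}$ are mutually orthogonal by Theorem \ref{t11}, the $v_i$ being pairwise orthogonal in $\CC^n$.

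The step requiring the most care is phrasing basis-independence correctly: the genuine content of the proposition is precisely that $\operatorname{span}\{S_{v_i}\}_{i=1}^d$ does not depend on which orthonormal basis is chosen, and the cleanest route is to identify this common value in advance as the intrinsic image of $W$ under $v \mapsto v'$, rather than comparing two arbitrary orthonormal bases directly. Once $V$ is defined in this way, there is no real obstacle remaining, since every other step is a routine translation through the real-linear identification.
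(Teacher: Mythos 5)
Your proof is correct and rests on exactly the two facts the paper's own proof uses: the real-linearity of the map $v\mapsto v'$ (equivalently the formula $[(a+ib)v]'=av'+bv''$ of Theorem \ref{t20}) together with $(iv)'=v''$ from Proposition \ref{p1}(7). The only difference is organizational --- the paper fixes one orthonormal basis, sets $V=\operatorname{span}\{S_{v_i}\}_{i=1}^d$, and then shows $w',w''\in V$ for every $w\in W$, while you name the common value intrinsically as the image $\{w':w\in W\}$; this packaging is marginally cleaner in that basis-independence and the full equality (rather than a single inclusion to be closed by symmetry or a dimension count) come for free, but it is essentially the same argument.
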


\begin{proof}
Choose any orthonormal basis $\{v_i\}_{i=1}^d$ for
$W$ and let $V=span\ \{S_{v_i}\}_{i=1}^m$.
We want to show that for any other orthonormal basis
$\{w_i\}_{i=1}^m$ for $W$, we have $span\ \{S_{w_i}\}_{i=1}^m = V$.
It suffices to show that for any $w \in W$, $w',w''
\in V$.  If we write 
$w=\sum_{j=1}^d(a+bi)v_i$, by Theorem \ref{t21} we have
\[ w'=\sum_{j=1}^d[(a_j+ib_j)v_j]' = 
\sum_{j=1}^m [a_jv_j'+b_jv_j''] \in V.\]
Since $w'' = (iw)'$, the same argument shows that
$w'' \in V$.    
\end{proof}

We will need one more preliminary result.

\begin{proposition}\label{p7}
Let $W$ be a subspace of $\CC^n$of dimension d, let $\{v_i\}_{i=1}^d$
be an orthonormal basis of $W$ and let $V=span\ \{S_{v_i}\}_{i=1}^d$
be the induced subspace in $\RR^{2n}$.  Let $Q$ (respectively $P$)
be the projection onto $W$ (respectively $V$).  Then for all
$x\in \CC^n$, $\|Qx\|=\|Px'\|$.
\end{proposition}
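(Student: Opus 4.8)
The plan is to reduce both sides to sums of squared coefficients against orthonormal bases, so that the asserted equality collapses to the single algebraic identity relating complex and real inner products recorded in Proposition \ref{p1}.

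First I would handle the complex side. Since $\{v_i\}_{i=1}^d$ is an orthonormal basis for $W$, the projection is $Qx=\sum_{i=1}^d\langle x,v_i\rangle v_i$, and orthonormality gives $\|Qx\|^2=\sum_{i=1}^d|\langle x,v_i\rangle|^2$. For the real side I would first verify that $\{v_i',v_i''\}_{i=1}^d$ is an orthonormal basis of $V$: it is an orthonormal set by Corollary \ref{cor}, it spans $V=\text{span}\,\{S_{v_i}\}_{i=1}^d$ by the very definition of the $S_{v_i}$, and it has exactly $2d$ elements, which matches $\dim V=2d$ from Proposition \ref{p5}. Hence $Px'=\sum_{i=1}^d\bigl[\langle x',v_i'\rangle v_i'+\langle x',v_i''\rangle v_i''\bigr]$, and orthonormality yields $\|Px'\|^2=\sum_{i=1}^d\bigl[\langle x',v_i'\rangle^2+\langle x',v_i''\rangle^2\bigr]$.

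Finally I would invoke part (4) of Proposition \ref{p1} with $w=x$ and $v=v_i$, namely $\langle x,v_i\rangle=\langle x',v_i'\rangle+i\langle x',v_i''\rangle$. Since the two summands are real, taking moduli gives $|\langle x,v_i\rangle|^2=\langle x',v_i'\rangle^2+\langle x',v_i''\rangle^2$, which matches the $i$-th term of $\|Px'\|^2$ exactly. Summing over $i$ then gives $\|Qx\|^2=\|Px'\|^2$, whence $\|Qx\|=\|Px'\|$. The computation is routine; the only step requiring genuine care, and the one I expect to be the main (if minor) obstacle, is justifying that $\{v_i',v_i''\}_{i=1}^d$ is an orthonormal \emph{basis} of $V$ rather than merely an orthonormal spanning family, which is precisely where the dimension count supplied by Proposition \ref{p5} is indispensable.
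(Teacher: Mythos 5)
Your proof is correct and follows essentially the same route as the paper: the paper writes $\|Qx\|^2=\sum_i|\langle x,v_i\rangle|^2=\sum_i\|P_ix'\|^2=\|Px'\|^2$ using Proposition \ref{p1}(5) and the orthogonality of the $S_{v_i}$, whereas you merely unpack $\|P_ix'\|^2$ into $\langle x',v_i'\rangle^2+\langle x',v_i''\rangle^2$ via Proposition \ref{p1}(4), which is the same computation one level lower. One small remark: the step you flag as the main obstacle is automatic, since an orthonormal spanning family is already linearly independent and hence a basis, so the dimension count from Proposition \ref{p5} is not actually needed.
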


\begin{proof}
Let $P_i$ be the projections onto $S_{v_i}$.
For any $i$, $|\langle x,v_i\rangle|=\|P_ix'\|$ and so
\[
\|Qx\|^2= \sum_{i=1}^d |\langle x,v_i\rangle|^2
= \sum_{i=1}^d\|P_ix'\|^2
= \|Px'\|^2.\]
\end{proof}

Now we give the complex analog of Edidin's Theorem \cite{E}.

\begin{theorem}
Let $\{W_i\}_{i=1}^m$ be subspaces of $\CC^n$ 
with projections $\{Q_i\}_{i=1}^m$ and let $\{V_i\}_{i=1}^m$ be the corresponding subspaces of $\RR^{2n}$ given in Proposition
\ref{p5} with projections $\{P_i\}_{i=1}^m$.  The following are equivalent:
\begin{enumerate}
\item $\{Q_i\}_{i=1}^m$ does phase retrieval.
\item For every $w'\in \RR^{2n}$, if $M=span\ \{P_iw'\}_{i=1}^m$
then $M^{\perp}= span\ \{w''\}$.
\end{enumerate}
\end{theorem}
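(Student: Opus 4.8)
The plan is to reduce the statement to the rank one situation already handled in Theorem \ref{t21}, using Proposition \ref{p7} as the bridge. Proposition \ref{p7} tells us that $\|Q_i x\| = \|P_i x'\|$ for every $x \in \CC^n$, so the complex quantities $\|Q_i x\|$ that govern phase retrieval by $\{Q_i\}$ are \emph{literally} the real quantities $\|P_i x'\|$ that govern the span condition on the $\{P_i\}$. The whole argument is then a bookkeeping exercise translating the proof of Theorem \ref{t21} from the two-dimensional projections onto a single $S_{v_j}$ to the higher rank projections $P_i$ onto $V_i = \mathrm{span}\,\{S_{v_k}\}$, and I would prove the two implications separately.

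For $(1)\Rightarrow(2)$ I would fix a nonzero $w'$ (if $w'=0$ there is nothing to prove, since then $w''=0$). First I would check the easy inclusion $\mathrm{span}\,\{w''\}\subseteq M^\perp$: because $V_i$ is the orthogonal direct sum of the two-dimensional spaces $S_{v_k}$ (Proposition \ref{p5}), the projection $P_i$ is the sum of the rank two projections onto those $S_{v_k}$, and each of these sends $w'$ to a vector orthogonal to $w''$ by the earlier proposition that $Pw'\perp w''$ for the rank two projection $P$ onto a single $S_v$; hence $P_i w' \perp w''$ and $w'' \in M^\perp$. For the reverse inclusion I would take $v' \in M^\perp$ and, as in Theorem \ref{t12}, write $w' = x' + y'$ and $v' = x' - y'$ for $x,y \in \CC^n$. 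Expanding $0 = \langle P_i(x'+y'),\,x'-y'\rangle = \|P_i x'\|^2 - \|P_i y'\|^2$ and invoking Proposition \ref{p7} gives $\|Q_i x\| = \|Q_i y\|$ for all $i$, so phase retrieval forces $x = cy$ with $|c|=1$; note $x,y \neq 0$ since $w' \neq 0$. Then $\|x'\|=\|y'\|$ and $S_x = S_y$, so $w' = x'+y'$ is a nonzero vector of $S_x \cap S_w$, whence $S_x = S_w$ by Theorem \ref{t11}. Finally $v' = x'-y' \in S_w$ is orthogonal to $w'$ (because $\|x'\|=\|y'\|$), and in the orthogonal basis $\{w',w''\}$ of $S_w$ the only such vectors are the multiples of $w''$. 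This yields $M^\perp = \mathrm{span}\,\{w''\}$.

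For $(2)\Rightarrow(1)$ I would suppose $\|Q_i a\| = \|Q_i b\|$ for all $i$ and aim for $a = cb$ with $|c|=1$. By Proposition \ref{p7} this reads $\|P_i a'\| = \|P_i b'\|$, so $\langle P_i(a'+b'),\,a'-b'\rangle = \|P_i a'\|^2 - \|P_i b'\|^2 = 0$, i.e. $a'-b' \in M^\perp$ for $M = \mathrm{span}\,\{P_i(a'+b')\}$. If $a'+b' = 0$ then $a = -b$ and we are done; otherwise set $u = a+b$, apply hypothesis (2) with $w' = u'$, and conclude $a'-b' = c\,u''$ for some real $c$. Since $u'' = (iu)'$ by Proposition \ref{p1}(7), backing out $a'$ and $b'$ gives $a = \tfrac{1+ci}{2}u$ and $b = \tfrac{1-ci}{2}u$; both coefficients are nonzero and have equal modulus, so $a = c'b$ with $|c'| = 1$, which is phase retrieval.

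I expect the main obstacle to be the last step of $(1)\Rightarrow(2)$: turning the complex conclusion $x = cy$ back into the geometric statement that $v' = x'-y'$ is a multiple of $w''$. This is where the association must be used most carefully, combining equal norms ($\|x'\|=\|y'\|$, giving $v'\perp w'$), the coincidence of equivalence classes ($S_x=S_y$, via the remark following Theorem \ref{t20}), and Theorem \ref{t11} (to identify $S_x$ with $S_w$) before reading off that $v'$ lies on the $w''$ axis of $S_w$. The remaining translations, via Proposition \ref{p7} and the orthogonal decomposition of $V_i$, should be routine.
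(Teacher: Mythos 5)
Your proof is correct and follows essentially the same route as the paper's: Proposition \ref{p7} as the bridge between $\|Q_ix\|$ and $\|P_ix'\|$, the decomposition $w'=x'+y'$, $v'=x'-y'$, and Theorem \ref{t11} to identify $S_x$ with $S_w$. If anything you are more careful than the paper in two places: you verify the inclusion $w''\in M^{\perp}$ explicitly by decomposing $P_i$ into its orthogonal rank-two pieces, and your last step of $(2)\Rightarrow(1)$ --- solving $a=\tfrac{1+ci}{2}u$, $b=\tfrac{1-ci}{2}u$ and taking the unimodular ratio $\tfrac{1+ci}{1-ci}$ --- makes precise what the paper only gestures at with ``a little geometry shows that $|c|=1$'' (the real scalar $c$ there need not have modulus one; it is the ratio that is unimodular).
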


\begin{proof}
$(1)\Rightarrow (2)$:  Given $v'\in M^{\perp}$, there exist two
vectors $x',y'$ so that $x'+y'=w'$ and $x'-y'=v'$.  Now,
\[ 0=\langle P_i(x'+y'),x'-y'\rangle = \langle P_ix'+P_iy',
P_ix'-P_iy'\rangle = \|P_ix'\|^2-\|P_iy'\|^2.\]
By Proposition \ref{p7}, $\|Q_ix\|=\|Q_iy\|$ for all $i=1,2,\ldots,
m$.  Since $\{Q_i\}_{i=1}^m$ does phase retrieval, we have that
$x=cy$ for some $|c|=1$ and $\|x'\|=\|y'\|$.  Since $S_x$ is a subspace $x' + y' = w',x'-y' \in S_x$.  By Theorem \ref{t11}
$S_w=S_x$.  But, $\{w',w''\}$ is an orthonormal basis for
$S_w$ and $x'+y'=w'\perp x'-y'$.  It follows that $x'-y'$ is a 
multiple of $w''$. Hence, $w'' \perp M$ and its scalar multiples
are the only vectors orthogonal to M.  

$(2)\Rightarrow (1)$:  Assume $v,w \in \CC^n$ and
$\|Q_iv\|=\|Q_iw\|$ for all $i=1,2,\ldots,m.$  By 
Proposition \ref{p7}, $\|P_iv'\|=\|P_iw'\|$ for 
all i.  Now,
\[ \langle P_i(v'+w'),v'-w'\rangle = \langle P_iv'+P_iw',
P_iv'-P_iw'\rangle = \|P_iv'\|^2-\|P_iw'\|^2=0.\]
By our assumption in (2), $v'-w'=c(v''+w'')$.  It
follows that $S_v=S_w$ and $v'-cv''=w'+cw''$. Since
$\|v'\|=\|v''\|$, $\|w'\|=\|w''\|$, $v'\perp v''$,
$w'\perp w''$, and $v'-cv''=w'+cw'$, a little geometry
shows that $|c|=1$ and so $v=dw$ for some $|d|=1$. 
\end{proof}
 
\section{Mutually Unbiased Bases}

In this section, we will see that for a family of mutually unbiased
bases in $\CC^n$ their corresponding rank 2 projections in 
$\RR^{2n}$ are mutually unbiased. 

\begin{definition}
Two orthonormal bases $\{e_i\}_{i=1}^n,\ \{e_i'\}_{i=1}^n$ for
$\HH^n$ are said to be {\bf mutually unbiased} if 
\[ |\langle e_i,e_j'\rangle|^2 = \frac{1}{n},\mbox{ for all }
i,j=1,2,\ldots,n.\]
\end{definition}

We first need a lemma.

\begin{proposition}\label{p23}
Given $v,w$ be unit vectors in $\CC^n$ let $P,Q$ be the rank
2 projections in $\RR^{2n}$ onto $S_v,S_w$.
Then, $tr\ (PQ) = 2|\langle v,w\rangle|^2$.
\end{proposition}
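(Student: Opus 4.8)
The plan is to compute the trace $\tr(PQ)$ directly in terms of orthonormal bases for the two $2$-dimensional subspaces $S_v$ and $S_w$. Since $v,w$ are unit vectors in $\CC^n$, Proposition \ref{p1}(1) tells us that $\{v',v''\}$ is an orthonormal basis for $S_v$ and $\{w',w''\}$ is an orthonormal basis for $S_w$. For rank-$2$ orthogonal projections expressed through orthonormal bases, the trace of the product has the clean form
\[
\tr(PQ) = \sum_{a\in\{v',v''\}}\ \sum_{b\in\{w',w''\}} \langle a,b\rangle^2,
\]
so the whole problem reduces to evaluating the four real inner products $\langle v',w'\rangle$, $\langle v',w''\rangle$, $\langle v'',w'\rangle$, and $\langle v'',w''\rangle$ and summing their squares.

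The key computational input is already packaged in Proposition \ref{p1}. First I would record the two fundamental identities: by parts (2) and (4) of that proposition, writing $\langle w,v\rangle = \langle w',v'\rangle + i\langle w',v''\rangle$, the real part is $\langle w',v'\rangle$ and the imaginary part is $\langle w',v''\rangle$, so that
\[
|\langle v,w\rangle|^2 = \langle v',w'\rangle^2 + \langle v',w''\rangle^2.
\]
Next, part (3) gives $\langle v'',w''\rangle = \langle v',w'\rangle$. The remaining cross term $\langle v'',w'\rangle$ needs one more identity: using $(iw)' = w''$ from part (7) together with part (3) applied to the pair $iw, v$, one gets $\langle v'',w'\rangle = -\langle v',w''\rangle$ (this is the antisymmetry in the imaginary part, reflecting that swapping the roles of $v$ and $w$ conjugates the complex inner product).

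Putting these together, the four squared inner products become $\langle v',w'\rangle^2$, $\langle v',w''\rangle^2$, $\langle v',w''\rangle^2$, and $\langle v',w'\rangle^2$, so their sum is
\[
\tr(PQ) = 2\langle v',w'\rangle^2 + 2\langle v',w''\rangle^2 = 2\bigl(\langle v',w'\rangle^2 + \langle v',w''\rangle^2\bigr) = 2|\langle v,w\rangle|^2,
\]
which is exactly the claim. I expect the main obstacle to be pinning down the sign and value of the one cross term $\langle v'',w'\rangle$ cleanly from the proposition, since it is the only one of the four that is not stated verbatim; everything else is a direct substitution. One could alternatively sidestep that term by invoking part (5), which already identifies $\|Pw'\|^2 = \langle v',w'\rangle^2 + \langle v',w''\rangle^2 = |\langle v,w\rangle|^2$, and then observe that the contributions of $w'$ and $w''$ to $\tr(PQ)$ are equal by the symmetry $(iw)'=w''$ combined with Corollary \ref{cor}, giving the factor of $2$ directly.
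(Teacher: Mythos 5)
Your proof is correct and takes essentially the same route as the paper's: both expand $\tr(PQ)$ into the four squared inner products between the orthonormal bases $\{v',v''\}$ and $\{w',w''\}$ and evaluate them using Proposition \ref{p1}. The only cosmetic difference is that the paper groups the four terms as $\|P_wv'\|^2+\|P_wv''\|^2=|\langle v,w\rangle|^2+|\langle iv,w\rangle|^2$ and invokes part (5) twice, whereas you evaluate each inner product individually via parts (2)--(4) and (7); both yield the factor of $2$ in the same way.
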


\begin{proof}
We note that $P=v'v'^*+v''v''^*$ and $Q=w'w'^*+w''w''^*$.
Now we compute using Proposition \ref{p1} (5):  
\begin{align*}
\langle P,Q\rangle &= tr\ (PQ)\\
&= tr\ (v'v'^*+v''v''^*)(w'w'^*+w''w''^*)\\
&= tr\ (v'v'^*w'w'^*)+tr\ (v'v'^*w''w''^*)+
tr\ (v''v''^*w'w'^*)+tr\ v''v''^*w''w''^*)\\
&= |\langle v',w'\rangle|^2+|\langle v',w''\rangle|^2+
|\langle v'',w'\rangle|^2+|\langle v'',w''\rangle|^2\\
&= \|Pwv'\|^2+\|P_wv''\|^2\\
&= |\langle v,w\rangle|^2+|\langle iv,w\rangle|^2\\
&= 2|\langle v,w\rangle|^2.
\end{align*}
\end{proof}

\begin{corollary}
If $\{v_{ij}\}_{j=1}^n$ are mutually unbiased orthonormal bases 
for $\CC^n$, for $i=1,2,\ldots,k$, then the rank 2 projections
$\{P_{ij}\}_{i=1,j=1}^{\ n,\ k}$ onto 
$\{S_{v_{ij}}\}_{i=1,j=1}^{\ n,\ k}$ 
in $\RR^{2n}$ are mutually unbiased.
\end{corollary}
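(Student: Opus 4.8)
The plan is to read off both defining conditions for a family of mutually unbiased rank $2$ projections directly from the two results immediately preceding the statement, namely Theorem~\ref{t11} and Proposition~\ref{p23}. Recall what must be checked: for each fixed $i$ the projections $\{P_{ij}\}_{j=1}^n$ should form the analog of an orthonormal basis, i.e.\ an orthogonal resolution of the identity on $\RR^{2n}$; and for $i\neq i'$ the cross trace $\tr(P_{ij}P_{i'j'})$ should be a single constant independent of $j$ and $j'$. I would establish these two facts in turn.

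For the intra-basis structure, fix $i$. Since $\{v_{ij}\}_{j=1}^n$ is orthonormal we have $v_{ij}\perp v_{ij'}$ for $j\neq j'$, so the ``moreover'' part of Theorem~\ref{t11} gives $S_{v_{ij}}\perp S_{v_{ij'}}$, whence $P_{ij}P_{ij'}=0$ and $\tr(P_{ij}P_{ij'})=0$. There are $n$ such pairwise orthogonal subspaces, each of real dimension $2$ by Corollary~\ref{cor}, so together they account for $2n=\dim\RR^{2n}$ dimensions and therefore $\sum_{j=1}^n P_{ij}=I$. Thus each fixed-$i$ family is genuinely an orthogonal decomposition of $\RR^{2n}$ into rank $2$ projections, playing the role that an orthonormal basis plays in the vector setting.

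For the inter-basis condition, take $i\neq i'$. Mutual unbiasedness of the complex bases says $|\langle v_{ij},v_{i'j'}\rangle|^2=\tfrac{1}{n}$ for all $j,j'$, and both $v_{ij}$ and $v_{i'j'}$ are unit vectors, so Proposition~\ref{p23} applies verbatim:
\[
\tr(P_{ij}P_{i'j'})=2|\langle v_{ij},v_{i'j'}\rangle|^2=\frac{2}{n}.
\]
This is constant in $j$ and $j'$, and it is exactly the value $r^2/d = 4/(2n)$ expected of mutually unbiased rank $r=2$ projections in dimension $d=2n$, so the two resolutions of the identity are mutually unbiased in the natural sense.

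The only real subtlety is conceptual rather than computational: one must first commit to what ``mutually unbiased rank $2$ projections'' should mean, since the rank one MUB definition has to be upgraded to families of higher-rank projections. Once the definition is taken to be pairwise orthogonal (indeed summing to the identity) within a family, and constant cross trace between families, the verification is immediate from Theorem~\ref{t11} and Proposition~\ref{p23} with no further estimates. I expect no genuine obstacle beyond bookkeeping the two index ranges and confirming that the constant $\tfrac{2}{n}$ agrees with the expected normalization.
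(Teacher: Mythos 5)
Your proposal is correct and follows essentially the same route as the paper: orthogonality within each fixed-$i$ family (via Corollary~\ref{cor}, which is also what underlies the ``moreover'' part of Theorem~\ref{t11} you cite) gives $\tr(P_{ij}P_{ij'})=0$, and Proposition~\ref{p23} gives the constant cross trace $\tfrac{2}{n}$. Your added remarks that each family resolves the identity and that $\tfrac{2}{n}=r^2/d$ are correct refinements, but the core argument coincides with the paper's.
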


\begin{proof}
By Corollary \ref{cor}, for each $i$, $\{S_{v_{ij}}\}_{j=1}^n$ is an orthogonal family
of two dimensional subspaces in $\RR^{2n}$.  Hence, for $j\not= k$, 
\[ \langle P_{ij},P_{ik}\rangle = tr\ P_{ij}P_{ik}=0.\]
Also, by Proposition \ref{p23}, if $i\not= k$,
\[ \langle P_{ij},P_{k,l}\rangle = tr\ P_{ij}P_{kl} =
 =2|\langle v_{ij},v_{kl}\rangle|^2 = \frac{2}{n}.\]
\end{proof}

\section{Fusion Frames}

In this section, we will apply our association of vectors 
in $\CC^n$ to
rank two projections in $\RR^n$ to answer a longstanding problem
in fusion frame theory.  This topic was introduced in \cite{CK}.
Fusion frames have application to dimension reduction and
Grassmannian packings \cite{K}.

\begin{definition}
Given a family of subspaces $\{W_i\}_{i=1}^m$ with respective
projections $\{P_i\}_{i=1}^m$ in $\RR^n$ or $\CC^n$, and given
$a_i>0$ for $i=1,2,\ldots,m$, we say $(W_i,a_i)_{i=1}^m$ (respectively, $(P_i,a_i)_{i=1}^m$) is a {\bf fusion frame}
with fusion frame bounds $0<A\le B<\infty$ if for all vectors
$v$ we have:
\[ A \|v\|^2 \le \sum_{i=1}^ma_i^2\|P_iv\|^2 \le B\|v\|^2.
\]
\end{definition}

We start by showing that frames in $\CC^n$ will associate with
fusion frames of 2-dimensional subspaces of $\RR^{2n}$.  

\begin{theorem}
Let $\{v_i\}_{i=1}^m$ be a frame in $\CC^n$
and let $W_i=S_{v_i}$ with orthogonal projection
$P_i$ for $i\in [m]$. The
following are equivalent:
\begin{enumerate}
\item $\{v_i\}_{i=1}^m$ has frame bounds $A,B$.
\item $(W_i,\|v_i\|)_{i=1}^m$ is a fusion frame
of two dimensional subspaces for $\RR^{2n}$ with fusion frame bounds $A,B$.
\end{enumerate}
\end{theorem}

\begin{proof}
This is immediate since given $w\in \CC^n$,
\[ \sum_{i=1}^m|\langle w,v_i\rangle|^2=\sum_{i=1}^m\|v_i'\|^2\|P_iw'\|^2.\]
\end{proof}

It is exceptionally difficult to construct tight fusion
frames.  Especially since they often do not exist.  For
example, there does not exist a tight fusion frame for
$\RR^3$ consisting of two 2-dimensional subspaces.  To see
this, let $(W_i,a_i)_{i=1}^2$ be a fusion frame for $\RR^3$
with associated projections $\{P_i\}_{i=1}^2$.
Let $x \in W_1\cap W_2$.  Then
\[ a_1^2\|P_1x\|^2+a_2^2\|P_2x\|^2= (a_1^2+b_1^2)\|x\|^2.\]
But, if $x\in W_1$ but not in $
W_2$ then $\|P_2x\|^2 < \|x\|^2$ and so
\[ a_1^2\|P_1x\|^2+a_2^2\|P_2x\|^2< (a_1^2+b_1^2)\|x\|^2.\]
I.e.  This fusion frame is not tight.  It is believed that this
problem occurs over and over with odd numbers of two dimensional
subspaces of $\RR^3$ as well as occuring in $\RR^{2n+1}$ for
all n.
It has been a longstanding open problem whether
this is a problem of {\it odd dimensions} or does this
problem show up in even dimensions also.
I.e. Is there a tight fusion frame of m 2-dimensional subspaces in $\RR^{2n}$ for all n and all $m\ge n$.
We see that the above answers this in the affirmative.

\begin{corollary}
The following fusion frames exist:
\begin{enumerate}
\item For every $m\ge n$, there is a tight frame
 $\{v_i\}_{i=1}^m$ for $\CC^n$ and then
 for $W_i=S_{v_i}$, $\{W_i,\|v_i\|\}_{i=1}^m$
is a tight fusion frame of two dimensional subspaces of
$\RR^{2n}$.
\item  For every $m\ge n$, there is an equal norm
Parseval frame $\{v_i\}_{i=1}^m$ for $\CC^n$ (and so $\|v_i\|^2 =
\frac{n}{m}$) and then
for $W_i=S_{v_i}$, $\{W_i,\sqrt{\frac{n}{m}}\}_{i=1}^m$ is a tight
fusion frame of two dimensional subspaces of $\RR^{2n}$. 
\end{enumerate}
\end{corollary}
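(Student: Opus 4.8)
The plan is to reduce both parts to the preceding equivalence theorem, which transfers the frame bounds $A,B$ of a frame $\{v_i\}_{i=1}^m$ in $\CC^n$ verbatim into the fusion frame bounds of $(S_{v_i},\|v_i\|)_{i=1}^m$ in $\RR^{2n}$. Once this reduction is in hand, the entire content of the corollary becomes the classical existence of suitable frames in $\CC^n$, and the real-variable tight fusion frames drop out for free.

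For part (2), I would first recall that for every $m\ge n$ there is an equal norm Parseval frame of $m$ vectors in $\CC^n$. The cleanest construction is the harmonic (truncated Fourier) frame: form the $m\times m$ discrete Fourier matrix, whose normalized rows are orthonormal in $\CC^m$, retain any $n$ of these rows, and take $\{v_i\}_{i=1}^m$ to be the columns of the resulting $n\times m$ matrix. Orthonormality of the retained rows makes the columns a Parseval frame, while the fact that every Fourier entry has the same modulus makes the columns equal in norm; since $\sum_{i=1}^m\|v_i\|^2=n$ for any Parseval frame, this forces $\|v_i\|^2=\frac{n}{m}$ for every $i$. Here $A=B=1$, so applying the preceding theorem gives at once that $(S_{v_i},\sqrt{\frac{n}{m}})_{i=1}^m$ is a tight fusion frame of two dimensional subspaces of $\RR^{2n}$.

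For part (1), I would note that any tight frame is simply a nonzero scalar multiple of a Parseval frame, so the equal norm Parseval frame constructed above already supplies a tight frame $\{v_i\}_{i=1}^m$ for every $m\ge n$; its two frame bounds coincide, and the preceding theorem immediately yields the corresponding tight fusion frame $(S_{v_i},\|v_i\|)_{i=1}^m$.

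I do not expect a genuine obstacle: the only mathematical input beyond bookkeeping is the existence of equal norm Parseval frames for arbitrary $m\ge n$, which I would cite from finite frame theory via the harmonic frame construction. The single point deserving care is matching the weights demanded by the theorem, namely $\|v_i\|$ rather than $\|v_i\|^2$, and checking that in the equal norm case they collapse to the constant $\sqrt{\frac{n}{m}}$. Both follow directly from Proposition \ref{p1}(1), which gives $\|v_i\|^2=\|v_i'\|^2$, together with the normalization $\sum_{i=1}^m\|v_i\|^2=n$ of a Parseval frame.
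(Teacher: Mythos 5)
Your proposal is correct and follows essentially the same route as the paper, which offers no separate proof and treats the corollary as an immediate consequence of the preceding equivalence theorem together with the standard existence of tight and equal norm Parseval frames of $m$ vectors in $\CC^n$ for every $m\ge n$. Your harmonic (truncated Fourier) frame construction and the check that the weights collapse to $\sqrt{\frac{n}{m}}$ simply supply the details the paper leaves implicit.
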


And for the general case we have,

\begin{theorem}
If $\{W_i,a_i\}_{i=1}^m$ be a fusion frame for
$\CC^n$ with dim $W_i=k_i$ for $i\in [m]$, with fusion frame bounds $A,B$ then
$\{V_i,a_i\}_{i=1}^m$, the induced $2k_i$-dimensional
subspaces of $\RR^{2n}$, form a fusion frame in 
$\RR^{2n}$ with fusion frame bounds $A,B$.   
\end{theorem}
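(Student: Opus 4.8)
The plan is to reduce the real fusion frame inequality in $\RR^{2n}$ directly to the given complex fusion frame inequality in $\CC^n$ via the norm-preserving correspondence already established. The two facts that do all the work are Proposition \ref{p1}(8), which tells us that every vector in $\RR^{2n}$ arises as $x'$ for a unique $x \in \CC^n$ (so the map $x \mapsto x'$ is a real-linear bijection $\CC^n \to \RR^{2n}$), and Proposition \ref{p7}, which says that for the induced subspace $V_i$ with projection $P_i$ we have $\|Q_i x\| = \|P_i x'\|$ for all $x \in \CC^n$, where $Q_i$ is the projection onto $W_i$.

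First I would fix an arbitrary $z \in \RR^{2n}$ and write $z = x'$ for the corresponding $x \in \CC^n$, using Proposition \ref{p1}(8). By Proposition \ref{p1}(1) we have $\|z\| = \|x'\| = \|x\|$, so the outer norms in the two inequalities match automatically. Next I would apply Proposition \ref{p7} termwise: for each $i$, $\|P_i z\|^2 = \|P_i x'\|^2 = \|Q_i x\|^2$. Multiplying by $a_i^2$ and summing over $i \in [m]$ then yields the identity
\[
\sum_{i=1}^m a_i^2 \|P_i z\|^2 = \sum_{i=1}^m a_i^2 \|Q_i x\|^2.
\]
Since $\{W_i, a_i\}_{i=1}^m$ is a fusion frame for $\CC^n$ with bounds $A, B$, the right-hand side is squeezed between $A\|x\|^2$ and $B\|x\|^2$, and replacing $\|x\|^2$ by $\|z\|^2$ gives exactly the fusion frame inequality $A\|z\|^2 \le \sum_{i=1}^m a_i^2 \|P_i z\|^2 \le B\|z\|^2$ in $\RR^{2n}$. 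As $z$ was arbitrary, this establishes the claim with the same bounds $A, B$.

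There is essentially no hard step here, since the entire argument is the transfer of a single identity through Proposition \ref{p7}. The only point demanding a moment of care is making sure that Proposition \ref{p7}, which is stated for one subspace $W$ with a chosen orthonormal basis and its induced subspace $V$, applies cleanly to each $W_i$ simultaneously; this is guaranteed by Proposition \ref{p5}, which shows the induced subspace $V_i$ is independent of the orthonormal basis chosen for $W_i$, so $P_i$ is unambiguously the projection onto $V_i$ and the termwise equality $\|Q_i x\| = \|P_i x'\|$ is legitimate for every $i$. I would also remark that this argument never uses tightness or any special structure of the $W_i$, so it applies verbatim to fusion frames of arbitrary dimensions $k_i$, which is precisely the generality the statement asserts.
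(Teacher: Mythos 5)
Your proposal is correct and follows essentially the same route as the paper: both transfer the fusion frame inequality through the identity $\|Q_ix\|=\|P_ix'\|$ of Proposition \ref{p7}, together with the facts that $x\mapsto x'$ is a norm-preserving bijection onto $\RR^{2n}$. Your write-up is actually more complete than the paper's terse proof, in particular in noting that Proposition \ref{p5} makes each $V_i$ well defined.
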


\begin{proof}
Let $Q_i$ (respectively, $P_i$) be the projection onto $W_i$
(respectively, $V_i$).
By the proof of Proposition \ref{p7} we have for all $x\in \CC^n$:
$\|Q_ix\|=\|P_ix'\|$.  It follows that
\[ \sum_{i=1}^ma_i^2\|Q_ix\|^2 = \sum_{i=1}^ma_i^2\|P_ix'\|^2.\]
This proves the result.
\end{proof}

\section{Classifying Norm Retrieval}

We will give a theorem similar to the Edidin Theorem
but which classifies norm retrieval.

\begin{definition}
A family of projections $\{P_i\}_{i=1}^m$ on $\HH^n$ does {\bf norm
retrieval} if whenever $x,y\in \HH^n$ and $\|P_ix\|=\|P_iy\|$, for all
$i=1,2,\ldots,m$, we have that $\|x\|=\|y\|$.
\end{definition}

It is immediate that if $\{P_i\}_{i=1}^m$ does phase retrieval then
it does norm retrieval.  The converse fails since orthonormal
bases do norm retrieval and must fail phase retrieval.
The importance of norm retrieval is that \cite{CCPW} if a family of
projections $\{P_i\}_{i=1}^m$ does phase retrieval on $\HH^n$ then
$\{(I-P_i)\}_{i=1}^m$ does phase retrieval if and only if it does
norm retrieval.

\begin{theorem}
Given projections $\{P_i\}_{i=1}^m$ on $\RR^n$ the following are
equivalent:
\begin{enumerate}
\item $\{P_i\}_{i=1}^m$ does norm retrieval.
\item For every $0\not= x\in \RR^n$, we have
\[ \left [ span\ \{P_ix\}\right ]^{\perp}\subset x^{\perp}.\]
\item For every $0\not= x\in \RR^n$, we have that
$x \in span \ \{P_ix\}_{i=1}^m$.
\end{enumerate}
\end{theorem}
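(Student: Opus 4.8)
The plan is to prove the two equivalences $(1)\Leftrightarrow(2)$ and $(2)\Leftrightarrow(3)$ separately, neither of which requires any computation beyond the identities already established. The whole argument rests on reparametrizing the pair $(x,y)$ appearing in the definition of norm retrieval by its ``sum and difference'' $a=x+y$, $b=x-y$, which is a linear bijection of $\RR^n\times\RR^n$ onto itself, and then reading the resulting orthogonality conditions as statements about the subspace $\mathrm{span}\,\{P_i a\}$ and its orthogonal complement.

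For $(1)\Leftrightarrow(2)$ I would start from two facts recorded earlier. First, the Note that $\|x\|=\|y\|$ if and only if $x-y\perp x+y$. Second, the identity $\langle x-y,P_i(x+y)\rangle=\|P_ix\|^2-\|P_iy\|^2$, so that $\|P_ix\|=\|P_iy\|$ for all $i$ is exactly the statement $x-y\perp P_i(x+y)$ for all $i$, i.e. $x-y\in[\mathrm{span}\,\{P_i(x+y)\}]^{\perp}$. Setting $a=x+y$ and $b=x-y$, norm retrieval becomes the assertion: for all $a,b\in\RR^n$, if $b\in[\mathrm{span}\,\{P_ia\}]^{\perp}$ then $b\in a^{\perp}$. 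Since $b$ ranges over all vectors once $a$ is fixed, this is precisely $[\mathrm{span}\,\{P_ia\}]^{\perp}\subset a^{\perp}$ for every $a$, which is $(2)$ after renaming $a$ as $x$. The degenerate case $a=0$ is harmless, as both sides equal $\RR^n$, so it may be dropped, matching the restriction $x\neq 0$ in $(2)$.

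For $(2)\Leftrightarrow(3)$ I would simply take orthogonal complements. In finite dimensions $W^{\perp\perp}=W$ for a subspace $W$, and $U\subset V$ if and only if $V^{\perp}\subset U^{\perp}$. Applying this to $(2)$, the containment $[\mathrm{span}\,\{P_ix\}]^{\perp}\subset x^{\perp}$ is equivalent to $(x^{\perp})^{\perp}\subset([\mathrm{span}\,\{P_ix\}]^{\perp})^{\perp}$, that is, $\mathrm{span}\,\{x\}\subset\mathrm{span}\,\{P_ix\}$, which is exactly $x\in\mathrm{span}\,\{P_ix\}_{i=1}^m$, statement $(3)$.

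There is no serious obstacle; the only care needed is bookkeeping. The one point to get right is that the vector called $x$ in $(2)$ and $(3)$ corresponds to the sum $x+y$ in the definition of norm retrieval, not to $x$ itself, so I must rename variables consistently to avoid collision. I would also verify the quantifier step carefully: the passage from ``$b\perp P_ia$ for all $i$ implies $b\perp a$'' to the subspace inclusion uses that this holds for \emph{every} $b$, so the hypothesis describes the full orthogonal complement of $\mathrm{span}\,\{P_ia\}$ rather than a single vector.
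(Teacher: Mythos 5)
Your proof is correct and takes essentially the same route as the paper: both arguments rest on the substitution $a=x+y$, $b=x-y$ together with the identities $\langle x-y,x+y\rangle=\|x\|^2-\|y\|^2$ and $\langle x-y,P_i(x+y)\rangle=\|P_ix\|^2-\|P_iy\|^2$. The only cosmetic differences are that you run $(1)\Leftrightarrow(2)$ as a direct biconditional with the quantifier over $b$ made explicit, where the paper proves both directions by contrapositive, and you spell out the double-orthogonal-complement duality behind $(2)\Leftrightarrow(3)$, which the paper declares immediate.
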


\begin{proof}
$(2) \Rightarrow (1)$:  We will prove the contrapositive.  If norm retrieval fails, then there
are vectors $x,y \in \RR^n$ with $\|P_ix\|=\|P_iy\|$ for all $i=12,\ldots,m$ but $\|x\|\not=\|y\|$.
This implies if $v=x+y$ and $w=x-y$ then $v,w$ are
not orthogonal. But 
$ w \in \left [span\ \{P_iv\}_{i=1}^m\right ]^{\perp},$
so property (2) fails.

$(2) \Leftrightarrow (3)$:  This is immediate.

$(2)\Rightarrow (1)$:  Again by the contrapositive,
there exists $v,w$ which are not orthogonal (so
$w\not= 0$) but $w \perp span\ \{P_iv\}_{i=1}^m$
and so $P_iw \perp P_iv$..
Write $v=x+y$ and $w=x-y$.  Since $v,w$ are not
orthogonal, $\|x\|\not= \|y\|$.  But, since
$w \perp P_iv$,
\begin{align*}
 \|P_i(v+w)\|^2&=\langle P_iv+P_iw,P_iv+P_iw\rangle\\
&=  \|P_iv\|^2+\|P_iw\|^2\\
 &= \langle P_iv-P_iw,P_iv-P_iw
\rangle = \|P_i(v-w)\|^2.
\end{align*}
So $\|P_ix\|=\|P_iy\|$ for every i, while $\|x\|\not= 
\|y\|$.  I.e.  $\{P_i\}_{i=1}^m$ fails norm retrieval.
\end{proof}

In general, it is difficult to show that projections $\{P_i\}_{i=1}^m$ do norm retrieval (especially if they fail phase retrieval) and
even more difficult to show that this passes to $\{(I-P_i)\}_{i=1}^m$.  It is known that, in general, $\{P_i\}_{i=1}^m$ may do norm
retrieval (even phase retrieval) while $\{(I-P_i)\}_{i=1}^m$ fails
norm retrieval \cite{X}.
We will now give a slightly weaker sufficient condition for
norm retrieval (respectively, phase retrieval) to pass from
$\{P_i\}_{i=1}^m$ to $\{(I-P_i)\}_{i=1}^m$.

\begin{theorem}\label{t1}
Let $\{P_i\}_{i=1}^m$ be projections on $\RR^n$.  The following
are equivalent:
\begin{enumerate}
\item $\{P_i\}_{i=1}^m$ does norm retrieval and for every $y\in 
\RR^n$ there are scalars $\sum_{i=1}^ma_i \not= 1$ so that
$y=\sum_{i=1}^ma_iP_iy.$
\item $\{(I-P_i)\}_{i=1}^m$ does norm retrieval and for every $y\in 
\RR^n$ there are scalars $\sum_{i=1}^ma_i \not= 1$ so that
$y=\sum_{i=1}^ma_i(I-P_i)y.$
\end{enumerate}
\end{theorem}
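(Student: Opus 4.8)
The plan is to prove the implication $(1)\Rightarrow(2)$; the reverse implication is identical after interchanging the roles of $P_i$ and $I-P_i$, since $I-(I-P_i)=P_i$ and each $I-P_i$ is again an orthogonal projection (onto the orthogonal complement of the range of $P_i$), so the whole statement is symmetric under that swap.

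The first observation I would record is that, in each of (1) and (2), the decomposition clause already forces the norm retrieval clause. Indeed, if for every $0\neq y\in\RR^n$ one can write $y=\sum_{i=1}^m a_iP_iy$, then in particular $y\in\mathrm{span}\{P_iy\}_{i=1}^m$, and by condition (3) of the preceding classification of norm retrieval this is exactly what it means for $\{P_i\}_{i=1}^m$ to do norm retrieval. The same remark, applied to the projections $\{I-P_i\}_{i=1}^m$, shows that the decomposition clause of (2) yields the norm retrieval clause of (2). So the real content of the theorem is the equivalence of the two decomposition clauses, and it suffices to transfer the decomposition from $\{P_i\}$ to $\{I-P_i\}$.

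To carry this out, I would fix $y\in\RR^n$ and, using (1), choose scalars $a_i$ with $s:=\sum_{i=1}^m a_i\neq 1$ and $y=\sum_{i=1}^m a_iP_iy$. The key step is a single rescaling: set $b_i=\dfrac{a_i}{s-1}$, which is legitimate precisely because $s\neq 1$. Then
\[
\sum_{i=1}^m b_i(I-P_i)y=\frac{1}{s-1}\Big(s\,y-\sum_{i=1}^m a_iP_iy\Big)=\frac{1}{s-1}(s\,y-y)=y,
\]
while $\sum_{i=1}^m b_i=\dfrac{s}{s-1}$, which equals $1$ only if $s=s-1$; hence $\sum_i b_i\neq 1$. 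This establishes the decomposition clause of (2), and by the previous paragraph the norm retrieval clause of (2) follows, completing $(1)\Rightarrow(2)$.

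The argument is short, so the main obstacle is really a bookkeeping point rather than a conceptual one: the hypothesis $\sum_i a_i\neq 1$ is exactly what guarantees that the rescaling factor $1/(s-1)$ is defined, and one must verify that this hypothesis is inherited, i.e.\ that $\sum_i b_i=s/(s-1)\neq 1$ as well. Beyond that, the only thing to be careful about is that $I-P_i$ is genuinely an orthogonal projection, so that the earlier norm retrieval classification may legitimately be invoked for the family $\{I-P_i\}_{i=1}^m$.
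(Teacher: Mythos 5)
Your proof is correct and follows essentially the same route as the paper: the same rescaling $b_i=a_i/(\sum_i a_i-1)$, the same verification that $\sum_i b_i=\frac{\sum_i a_i}{\sum_i a_i-1}\neq 1$, and the same appeal to symmetry for the converse. Your added observation that the decomposition clause by itself already yields norm retrieval (via the span criterion of the preceding classification theorem) is a small clarification the paper leaves implicit, but it does not change the argument.
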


\begin{proof}
$(1)\Rightarrow (2)$:  Given $0\not= y \in \RR^n$, choose $\{a_i\}_{i=1}^m$ 
  to that $\sum_{i=1}^ma_iP_iy=y$ and $\sum_{i=1}^ma_i\not= 1$.
Then,
\[ \sum_{i=1}^ma_i(I-P_i)y= \left (\sum_{i=1}^ma_i\right )y
- \sum_{i=1}^ma_iP_iy= \left ( \sum_{i=1}^ma_i -1\right )y.\]
So 
\[ y = \sum_{i=1}^m\frac{a_i}{\sum_{i=1}^ma_i-1}(I-P_i)y.\]
Also,
\[ \sum_{i=1}^m\frac{a_i}{\sum_{i=1}^ma_i-1}= \frac{\sum_{i=1}^ma_i}{\sum_{i=1}^ma_i-1} \not= 1,\]
So $y\in span\ \{(I-P_i)y\}_{i=1}^m.$
I.e. $\{(I-P_i)\}_{i=1}^m$
does norm retrieval.

$(2)\Rightarrow (1)$:  By symmetry.
\end{proof}

\begin{theorem}
Let projections $\{P_i\}_{i=1}^m$ do phase retrieval in $\RR^n$.
The following holds:

 If for every $0\not= y$ there exist scalars $\{a_i\}_{i=1}^m$
so that $y=\sum_{i=1}^ma_iP_iy$ 
and $\sum_{i=1}^ma_i\not= 1$, then $\{(I-P_i)\}_{i=1}^m$ does
phase retrieval.
\end{theorem}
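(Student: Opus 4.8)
The plan is to avoid verifying phase retrieval of $\{(I-P_i)\}_{i=1}^m$ from scratch and instead assemble it from two facts already in hand. First I would invoke the result of \cite{CCPW} recalled at the opening of this section: since $\{P_i\}_{i=1}^m$ does phase retrieval, the family $\{(I-P_i)\}_{i=1}^m$ does phase retrieval \emph{if and only if} it does norm retrieval. This immediately reduces the entire theorem to the single task of showing that $\{(I-P_i)\}_{i=1}^m$ does norm retrieval.

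To obtain norm retrieval for the complements, I would feed Theorem \ref{t1}. Its condition (1) asks for two things: that $\{P_i\}_{i=1}^m$ does norm retrieval, and that for every $y$ there exist scalars with $\sum_{i=1}^m a_i\not=1$ and $y=\sum_{i=1}^m a_i P_i y$. The second requirement is exactly the standing hypothesis of the present theorem, so it is free. The first is also free, because a family doing phase retrieval automatically does norm retrieval, as noted right after the definition of norm retrieval. Hence condition (1) of Theorem \ref{t1} is satisfied.

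Applying Theorem \ref{t1}, its condition (2) now holds, and in particular $\{(I-P_i)\}_{i=1}^m$ does norm retrieval. Combining this with the reduction of the first step, namely that phase retrieval of $\{P_i\}_{i=1}^m$ together with norm retrieval of $\{(I-P_i)\}_{i=1}^m$ forces phase retrieval of $\{(I-P_i)\}_{i=1}^m$ by \cite{CCPW}, completes the argument.

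There is essentially no computational obstacle here; the content lies entirely in recognizing that the hypothesis supplies precisely the extra ingredient Theorem \ref{t1} needs, and that the \cite{CCPW} equivalence upgrades the resulting norm retrieval back to phase retrieval. The only point deserving care is the quantifier structure: both Theorem \ref{t1} and the hypothesis quantify over all nonzero $y$ with scalars $\{a_i\}$ that may depend on $y$, so one should confirm that the scalars furnished by the hypothesis are permitted to vary with $y$, which they are.
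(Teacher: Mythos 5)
Your proposal is correct and is essentially the paper's own argument: the paper's proof reads, in its entirety, ``By Theorem \ref{t1}, $\{(I-P_i)\}_{i=1}^m$ does norm retrieval and hence phase retrieval,'' which is exactly your two-step assembly (Theorem \ref{t1} applied with norm retrieval of $\{P_i\}_{i=1}^m$ coming for free from phase retrieval, followed by the \cite{CCPW} equivalence to upgrade norm retrieval of the complements back to phase retrieval). You have simply made explicit the steps the paper leaves implicit.
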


\begin{proof}
  By Theorem \ref{t1}, $\{(I-P_i)\}_{i=1}^m$ does norm retrieval
  and hence phase retrieval.
\end{proof}

\begin{corollary}
If $\{P_i\}_{i=1}^m$ does norm retrieval (respectively, phase
retrieval) on $\RR^n$ and for every $y\in \RR^n$ either there
exists $\sum_{i=1}^ma_i \not= 1$ and $y=\sum_{i=1}^ma_iP_iy$
or there exists $\sum_{i=1}^ma_i\not= 0$ and $\sum_{i=1}^ma_iP_iy=0$,
then $\{(I-P_i)\}_{i=1}^m$ does norm retrieval (respectively, 
phase retrieval.
\end{corollary}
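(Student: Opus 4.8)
The plan is to reduce the entire statement to the spanning characterization of norm retrieval established just above, namely that a family of projections does norm retrieval on $\RR^n$ if and only if every nonzero vector lies in the span of its projections. Applying that characterization to the family $\{I-P_i\}_{i=1}^m$, it suffices to prove that under the hypothesis every $0\not= y\in \RR^n$ satisfies $y\in span\ \{(I-P_i)y\}_{i=1}^m$. The two alternatives offered by the hypothesis are exactly the two routes to this span, and I would handle them in parallel.

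To carry this out, fix $0\not= y$ and take the scalars $\{a_i\}_{i=1}^m$ that the hypothesis supplies for $y$. The computation common to both cases is
\[ \sum_{i=1}^m a_i(I-P_i)y = \Big(\sum_{i=1}^m a_i\Big)y - \sum_{i=1}^m a_iP_iy.\]
In the first case, where $\sum_i a_i\not= 1$ and $\sum_i a_iP_iy = y$, the right-hand side collapses to $\big(\sum_i a_i - 1\big)y$; since the coefficient is nonzero, $y$ is a scalar multiple of a vector in $span\ \{(I-P_i)y\}_{i=1}^m$, hence lies in that span. In the second case, where $\sum_i a_i\not= 0$ and $\sum_i a_iP_iy = 0$, the same expression equals $\big(\sum_i a_i\big)y$, and again the nonzero coefficient places $y$ inside $span\ \{(I-P_i)y\}_{i=1}^m$. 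Either way the spanning condition holds for $y$, so by the characterization $\{(I-P_i)\}_{i=1}^m$ does norm retrieval. This is the direct merging of the two spanning arguments already used in Theorem \ref{t1} and in the theorem immediately preceding this corollary.

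For the phase retrieval version I would add one further step. If $\{P_i\}_{i=1}^m$ does phase retrieval then it does norm retrieval, so the hypothesis applies and the argument above yields that $\{(I-P_i)\}_{i=1}^m$ does norm retrieval. I then invoke the result of \cite{CCPW} recalled at the start of this section: when $\{P_i\}_{i=1}^m$ does phase retrieval, the family $\{(I-P_i)\}_{i=1}^m$ does phase retrieval if and only if it does norm retrieval. Combining these gives phase retrieval for $\{(I-P_i)\}_{i=1}^m$, completing the parenthetical claim.

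I do not expect a genuinely hard step here; the only point requiring care is the bookkeeping of the two cases, specifically checking that the scalar coefficient $\sum_i a_i - 1$ (first case) or $\sum_i a_i$ (second case) is nonzero. This is precisely what the two normalization conditions $\sum_i a_i\not= 1$ and $\sum_i a_i\not= 0$ guarantee, and their sole function is to prevent $y$ from collapsing out of the relevant span. I would also note, as a sanity check on the hypotheses, that the norm-retrieval conclusion actually follows from the spanning condition alone, with the norm-retrieval assumption on $\{P_i\}_{i=1}^m$ serving to make the statement parallel to its phase-retrieval counterpart.
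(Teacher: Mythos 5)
Your proposal is correct and follows essentially the same route as the paper: the identity $\sum_i a_i(I-P_i)y = (\sum_i a_i)y - \sum_i a_iP_iy$ handled in the two cases, concluding $y\in \mathrm{span}\,\{(I-P_i)y\}_{i=1}^m$ and invoking the spanning characterization of norm retrieval (and, for the parenthetical claim, the result of \cite{CCPW}). Your closing observation that the spanning condition alone already yields norm retrieval is a fair point and is consistent with how the paper's own proof actually proceeds.
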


\begin{proof}
If $\sum_{i=1}^ma_iP_iy=y$ and $\sum_{i=1}^ma_i\not= 1$ then
\[ \sum_{i=1}^ma_i(I-P_i)y=\left ( \sum_{i=1}^ma_i \right )y
- \sum_{i=1}^ma_iP_iy=\left ( \sum_{i=1}^ma_i -1\right )y.\]
So $y\in span\ \{(I-P_i)y\}_{i=1}^m$.  If $\sum_{i=1}^ma_i\not= 0$
and $\sum_{i=1}^ma_iP_iy=0$, then
\[  \sum_{i=1}^ma_i(I-P_i)y=\left ( \sum_{i=1}^ma_i \right )y
- \sum_{i=1}^ma_iP_iy=\left ( \sum_{i=1}^ma_i \right )y,\]
and so $y\in span \ \{(I-P_i)y\}_{i=1}^m$.
\end{proof}

\section{Equiangular and Biangular Frames}

In this section we will see how our association changes equiangular
and biangular tight frames in $\CC^n$ into equiangular and biangular tight
fusion frames in $\RR^{2n}$.  

\begin{definition}
A family of unit vectors $\{\phi_i\}_{i=1}^m$ in $\RR^n$ or $\CC^n$ is said to be {\it k-angular} if there are constants $\alpha_1
> \alpha_2 > \cdots > \alpha_k \ge 0$ so that
\[ \{|\langle \phi_i,\phi_j\rangle| :i\not= j\}|= \{\alpha_i:i=1,2,\ldots,k\}\]
Similarly, a family of subspaces $\{W_i\}_{i=1}^m$ with respective
projections $\{P_i\}_{i=1}^m$ is ${\it k-angular}$ if
\[ |\{\langle P_i,P_j\rangle|:i\not= j\}|= \{\alpha_i:i=1,2,\ldots,k\}.\]
If $k=1$, we call this {\it equiangular} and if $k=2$ we call this
{\it biangular}.
\end{definition}

Heath and Strohmer \cite{SH} (See also \cite{BH,Sus}) made a
detailed analysis of this class of frames and related this to
several areas of research.  Wooters and Fields \cite{woot} use them (not
under this name) to obtain {\em unbiased} measurements to determine the 
state (or density operator) of a quantum system.  
 Grassmannian frames also
arise in communication theory.  In \cite{S2} Grassmannian frames are used for
low-bit rate channel feedback in MIMO systems. It also arises in {\bf Welsh bound equality} sequences
\cite{Wa}.  

The main problem in this area of research is that very few equiangular/biangular tight frames are known.  It is known that
the number of equiangular lines in $\RR^n$ is less than or equal
to $n(n+1)/2$ \cite{del} but these bounds are rarely achieved.
For example, the maximal number of equiangular lines in $\RR^{d}$
is just 28 for all . $7\le d\le 14$ \cite{del}. In the complex case, the maximal number of
equiangular lines in $\CC^n$ is $n^2$ \cite{del}.  It is an open
problem whether this number is always attained.  We will now show
how to transfer equiangular lines from $\CC^n$ to equiangular
fusion frames of two dimensional subspaces of 
(respectively, equiangular families of rank 2 projections)
$\RR^{2n}$.  This provides many more equiangular sets to deal with
in $\RR^{2n}$ if researchers can live with rank 2 projections
instead of vectors.  For example, there are only 28 equiangular
lines in $\RR^{14}$, but there are 15 equiangular families of
rank 2 projections in $\RR^{14}$.

\begin{theorem}
If a unit norm tight frame $\{\phi_i\}_{i=1}^m$ in $\CC^n$ is
equiangular, then the rank 2 projections $P_i$ onto  $S_{v_i}$ 
in $\RR^{2n}$ form an equiangular tight fusion frame in $\RR^{2n}$.
\end{theorem}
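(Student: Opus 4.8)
The plan is to transfer the two defining features of the frame, tightness and equiangularity, separately through the association, using the results already established in the excerpt.

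First I would handle the fusion frame structure. Since $\{\phi_i\}_{i=1}^m$ is a unit norm tight frame for $\CC^n$, it has frame bounds $A=B$. By the theorem in the Fusion Frames section (applied with $W_i=S_{\phi_i}$ and weights $\|\phi_i\|=1$), the family $(W_i,1)_{i=1}^m$ is a tight fusion frame of two dimensional subspaces for $\RR^{2n}$ with the same bounds $A=B$. This is immediate from the identity $\sum_{i=1}^m|\langle w,\phi_i\rangle|^2=\sum_{i=1}^m\|P_iw'\|^2$ since $\|\phi_i'\|=\|\phi_i\|=1$.

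Next I would establish equiangularity of the projections. Here the key tool is Proposition \ref{p23}, which gives $\tr(P_iP_j)=2|\langle \phi_i,\phi_j\rangle|^2$ for unit vectors $\phi_i,\phi_j$. Since the original frame is equiangular, there is a single constant $\alpha$ with $|\langle \phi_i,\phi_j\rangle|=\alpha$ for all $i\neq j$. Therefore
\[
\langle P_i,P_j\rangle = \tr(P_iP_j)=2|\langle \phi_i,\phi_j\rangle|^2 = 2\alpha^2,
\]
a single constant independent of the pair $(i,j)$ with $i\neq j$. By the definition in the Equiangular and Biangular Frames section, this says precisely that $\{P_i\}_{i=1}^m$ is equiangular (the $k=1$ case). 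Combining this with the previous paragraph, the projections form an equiangular tight fusion frame in $\RR^{2n}$, as claimed.

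I do not expect any serious obstacle here, since both halves reduce to quoting earlier results: the tightness is inherited verbatim by the fusion frame theorem, and equiangularity follows from the trace formula in Proposition \ref{p23} together with the constancy of $|\langle \phi_i,\phi_j\rangle|$. The only point requiring a word of care is reconciling the two notions of angle in the definition of $k$-angularity for vectors versus projections: for vectors the angle is measured by $|\langle \phi_i,\phi_j\rangle|$, while for projections it is measured by $\langle P_i,P_j\rangle=\tr(P_iP_j)$. Proposition \ref{p23} is exactly the bridge that converts one constant into the other, so a single constant on the vector side produces a single constant on the projection side, and the equiangular property transfers cleanly.
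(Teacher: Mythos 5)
Your proposal is correct and follows essentially the same route as the paper: the paper's one-line proof invokes the trace identity of Proposition \ref{p23} to transfer equiangularity, exactly as you do, with tightness inherited implicitly from the earlier fusion frame theorem. Your write-up is in fact more careful than the paper's, which states the tightness half only implicitly and contains a stray factor of $2$ in its displayed chain of equalities.
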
 

\begin{proof}
We just observe that 
\[ |\langle \phi_i,\phi_j\rangle|^2=2\|P_i\phi_j'\|^2
= tr\ P_iP_j=\langle P_i,P_j\rangle.\]
\end{proof}

Similarly we have:

\begin{theorem}
If a unit norm tight frame $\{\phi_i\}_{i=1}^m$ in $\CC^n$ is
k-angular, then the rank 2 projections $P_i$ onto  $S_{\phi_i}$ 
in $\RR^{2n}$ form a k-angular tight fusion frame in $\RR^{2n}$.
\end{theorem}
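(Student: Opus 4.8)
The plan is to mirror the proof of the preceding equiangular theorem (which is the case $k=1$) and reduce the $k$-angularity of the projections to a direct consequence of Proposition \ref{p23}, together with the observation that squaring is injective and order-preserving on the nonnegative reals. The tightness of the fusion frame will then follow from the earlier correspondence between tight frames in $\CC^n$ and tight fusion frames of two-dimensional subspaces in $\RR^{2n}$.

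First I would record the key identity. For $i \neq j$, since the $\phi_i$ are unit vectors, Proposition \ref{p23} gives
\[
\ip{P_i}{P_j} = \tr(P_iP_j) = 2\absip{\phi_i}{\phi_j}^2.
\]
By hypothesis $\{\phi_i\}_{i=1}^m$ is $k$-angular, so the set of off-diagonal absolute inner products is exactly $\{\alpha_1,\ldots,\alpha_k\}$ with $\alpha_1 > \alpha_2 > \cdots > \alpha_k \ge 0$.

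Next I would track how these $k$ values transform under the identity above. The map $t \mapsto 2t^2$ is strictly increasing on $[0,\infty)$, hence injective and order-preserving there; consequently it carries the $k$ distinct nonnegative numbers $\alpha_1 > \cdots > \alpha_k$ to the $k$ distinct numbers $2\alpha_1^2 > 2\alpha_2^2 > \cdots > 2\alpha_k^2 \ge 0$ without merging or creating values. Therefore
\[
\{\ip{P_i}{P_j} : i \neq j\} = \{2\alpha_1^2, 2\alpha_2^2, \ldots, 2\alpha_k^2\},
\]
a set of exactly $k$ distinct values, which is precisely the statement that $\{P_i\}_{i=1}^m$ is $k$-angular, with constants $\beta_\ell = 2\alpha_\ell^2$.

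Finally, for the tight fusion frame conclusion I would invoke the earlier theorem of this section showing that a frame $\{\phi_i\}$ in $\CC^n$ with frame bounds $A,B$ induces a fusion frame $(S_{\phi_i},\|\phi_i\|)_{i=1}^m$ of two-dimensional subspaces of $\RR^{2n}$ with the same bounds $A,B$; since $\{\phi_i\}$ is unit norm and tight we have $A=B$ and all weights equal to $1$, so the induced fusion frame is tight. The only point requiring care — and it is a minor one — is the injectivity of $t \mapsto 2t^2$, which is immediate because all of the $\alpha_\ell$ are nonnegative, so no two of the original angle values can be identified after squaring.
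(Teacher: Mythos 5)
Your proposal is correct and follows essentially the same route as the paper: the paper proves the equiangular case by the single identity $\tr(P_iP_j)=2|\langle\phi_i,\phi_j\rangle|^2$ from Proposition \ref{p23} and then asserts the $k$-angular case with ``similarly,'' which is exactly your argument made explicit. Your added remark that $t\mapsto 2t^2$ is injective on $[0,\infty)$ (so the $k$ distinct angles remain $k$ distinct trace values) and your appeal to the earlier frame-to-fusion-frame correspondence for tightness are precisely the details the paper leaves implicit.
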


\end{document}